\theoremstyle{plain}
\newtheorem{theorem}{Theorem}
\theoremstyle{nonumberbreak}
\newtheorem{claim}{Claim}
\newtheorem{corollary}[theorem]{Corollary}
\newtheorem{lemma}[theorem]{Lemma}
\numberwithin{claim}{theorem}
\newtheorem{conjecture}[theorem]{Conjecture}
\newtheorem*{conjecture*}{Conjecture}
\newtheorem{prob}[theorem]{Problem}
\theoremstyle{definition}
\newtheorem{remark}{Remark}
\newtheorem{proposition}[theorem]{Proposition}
\theoremstyle{plain}
\theoremstyle{plain}
\theoremstyle{plain}
\newtheorem{fact}{Fact}
\title{ Properly colored short cycles in edge-colored graphs}
\author{Laihao Ding$^a$ \ \ Jie Hu$^b$\ \ Guanghui Wang$^c$\ \ Donglei Yang$^d$\unskip\thanks{\emph{E-mail address:} dlyang@sdu.edn.cn}
\unskip\\[.5em]
{\small $^a$ School of Mathematics and Statistics,}\\
{\small Central China Normal University, Wuhan 430079, China}\\
{\small $^b$ Laboratoire Interdisciplinaire des Sciences du Num\'erique,}\\
{\small Universit\'e Paris-Saclay, Orsay Cedex 91405, France}\\
{\small $^c$ School of Mathematics,}\\
{\small Shandong University,  Jinan 250100, China}\\
{\small $^d$ Data Science Institute,}\\
{\small Shandong University,  Jinan 250100, China}\\
 }
\date{}
\begin{document}
\baselineskip 0.65cm

\maketitle
\begin{abstract}
\noindent Properly colored cycles in edge-colored graphs are closely related to directed cycles in oriented graphs. As an analogy of the well-known Caccetta-H\"{a}ggkvist Conjecture, we study the existence of properly colored cycles of bounded length in an edge-colored graph. We first prove that for all integers $s$ and $t$ with $t\geq s\geq2$, every edge-colored graph $G$ with no properly colored $K_{s,t}$ contains a spanning subgraph $H$ which admits an orientation $D$ such that every directed cycle in $D$ is a properly colored cycle in $G$. Using this result, we show that for $r\geq4$, if the Caccetta-H\"{a}ggkvist Conjecture holds , then every edge-colored graph of order $n$ with minimum color degree at least $n/r+2\sqrt{n}+1$ contains a properly colored cycle of length at most $r$. In addition, we also obtain an asymptotically tight total color
degree condition which ensures a properly colored (or rainbow) $K_{s,t}$. 

\end{abstract}
\bigskip
\noindent {\textbf{Keywords}: Color degree; Properly colored $K_{s,t}$; Caccetta-H\"{a}ggkvist Conjecture}
\section{Introduction}
In this paper, all graphs considered are simple graphs. All the terminology and notation used but not defined can be found in \cite{BM}. An \emph{edge-colored graph} is a graph with each edge assigned a color. Given an edge-colored graph $G$, we say $G$ is a \emph{properly colored} graph if any two adjacent edges receive different colors, and $G$ is a \emph{rainbow} graph if all the edges receive pairwise different colors. For every vertex $v\in V(G)$, the \emph{color degree} of $v$, denoted by $d^c_G(v)$, is the number of distinct colors appearing on the incident edges of $v$. The \emph{minimum color degree} of $G$, denoted by $\delta^c(G)$, is the minimum $d^c_G(v)$ over all vertices $v\in V(G)$. By $\Delta^{mon}(v)$, we denote the maximum number of incident edges of $v$ with the same color, and $\Delta^{mon}(G)$ is the maximum $\Delta^{mon}(v)$ over all vertices $v\in V(G)$.

The study on the existence of properly colored cycles in edge-colored graphs has a long history. Grossman and H\"{a}ggkvist \cite{GH} provided a sufficient condition on the existence of properly colored cycles in edge-colored graphs with two colors. Later, Yeo \cite{YEO} extended the result to edge-colored graphs with any number of colors.
During the past decades, establishing sufficient conditions forcing properly colored (rainbow) cycles of certain lengths has received considerable attention \cite{BA,BE,FLZ,LI,lnxz,lo2,lo3}.
In many classical problems the host graph $G$ is complete. For instance, Bollob\'{a}s and Erd\H{o}s \cite{BE} conjectured that every edge-colored $K_n$ with $\Delta^{mon}(K_n)\leq\lfloor n/2\rfloor-1$ contains a properly colored Hamilton cycle and this conjecture was asymptotically resolved by Lo \cite{lo4}. Later, Lo \cite{lo} considered the existence of properly colored Hamilton cycles under color degree conditions. Using the absorbing technique and stability method, Lo \cite{lo3} recently proved that for sufficiently large $n$, every edge-colored graph $G$ on $n$ vertices with $\delta^c(G)\geq 2n/3$ contains a properly colored Hamilton cycle.
The study of properly colored cycles is closely related to directed cycles in oriented graphs. On the one hand, oriented graphs are often used as auxiliary tools to find properly colored cycles. More details can be found in \cite{FLW,LI,lnxz,lo}. On the other hand, finding directed cycles can be formulated as a special case of finding properly colored cycles. To see this, we have the following construction which was first introduced by Li \cite{LI}. Let $D$ be an orientation of a simple graph $G$ with $V(G)=\{v_1,v_2,\ldots,v_n\}$. For every vertex $v\in V(G)$, we write $d^+_{D}(v)$ and $d^-_{D}(v)$ for its outdegree and indegree in $D$, respectively. Define an edge coloring $\tau$ of $G$ by coloring the edge $v_iv_j$ with $j$ for all arcs $(v_i,v_j)$ in $D$. The resulting edge-colored graph, denoted by $(D,\tau)$, is called the \emph{signature} of $D$. Then the following three properties hold.
\begin{itemize}
  \item [(1)] For every vertex $v\in V(G)$, $d^c_{(D,\tau)}(v)=d^+_{D}(v)$ if $d^-_{D}(v)=0$, otherwise $d^c_{(D,\tau)}(v)=d^+_{D}(v)+1$.
  \item [(2)] A cycle in $G$ is a directed cycle in $D$ if and only if it is a properly colored cycle in $(D,\tau)$.
  \item [(3)] $(D,\tau)$ contains no properly colored $K_{s,t}$ for all integers $s\geq2$ and $t\geq3$.
\end{itemize}
Properties (1) and (2) can be easily observed. To see (3), any properly colored $K_{s,t}$ in $(D,\tau)$ corresponds to an oriented $K_{s,t}$ in which each vertex has at most one in-neighbor. It follows that $st\leq s+t$, which is impossible when $s\geq2,t\geq3$.

A fundamental question in digraph theory is to establish outdegree conditions ensuring that a digraph contains certain structures.
For all positive integers $n$ and $r$, let $f(n,r)$ be the least integer such that every digraph $D$ of order $n$ with $\delta^+(D)\geq f(n,r)$ contains a directed cycle of length at most $r$. Recall the following well-known Caccetta-H\"{a}ggkvist Conjecture \cite{CH}.

\begin{conjecture}[\cite{CH}]\label{c1}
  For all positive integers $n,r$ with $n\geq r$, $f(n,r)=\lceil n/r\rceil$.
\end{conjecture}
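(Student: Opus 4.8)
The equality $f(n,r)=\lceil n/r\rceil$ splits into two inequalities of completely different character, and I would treat them separately. The lower bound $f(n,r)\ge\lceil n/r\rceil$ is an easy extremal construction: it suffices to exhibit one digraph on $n$ vertices with minimum out-degree $\lceil n/r\rceil-1$ and no directed cycle of length at most $r$. Take the circulant digraph on $v_0,\dots,v_{n-1}$ in which each $v_i$ sends arcs to $v_{i+1},\dots,v_{i+d}$ (subscripts modulo $n$), where $d=\lceil n/r\rceil-1$. Every out-degree equals $d$, and any directed cycle corresponds to a sequence of forward steps of sizes in $\{1,\dots,d\}$ whose total is a positive multiple of $n$; the smallest attainable total is $n$ itself, which needs at least $\lceil n/d\rceil$ steps. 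Since $d=\lceil n/r\rceil-1<n/r$, we have $n/d>r$ and hence $\lceil n/d\rceil\ge r+1$, so the girth exceeds $r$. Thus out-degree $\lceil n/r\rceil-1$ does not force a cycle of length at most $r$, giving the lower bound.

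The reverse inequality $f(n,r)\le\lceil n/r\rceil$ --- that $\delta^+(D)\ge\lceil n/r\rceil$ always yields a directed cycle of length at most $r$ --- is exactly the open content of the conjecture, and I would not expect to establish it in full. A couple of cases are within reach by direct counting. For $r=2$, a digraph with no directed $2$-cycle is an oriented graph with at most $\binom{n}{2}$ arcs, whereas $\delta^+\ge\lceil n/2\rceil$ forces $\sum_v d^+(v)\ge n\lceil n/2\rceil>\binom{n}{2}$, a contradiction. The first genuinely hard case is $r=3$ (directed triangles), which is entangled with Seymour's Second Neighborhood Conjecture; here only bounds of the form $\delta^+\ge cn$ with $c$ somewhat larger than $1/3$ are currently known, by spectral and flag-algebra counting methods.

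For the general upper bound the natural plan is a global-to-local counting argument: from a hypothetical counterexample of girth greater than $r$ I would track the iterated out-neighborhoods $N^+(v),N^{++}(v),\dots$, using the girth condition to force these layers to be nearly disjoint over the first $r$ steps and then deriving a contradiction once the mass $\delta^+\ge\lceil n/r\rceil$ per layer can no longer fit inside the $n$ available vertices. The main obstacle is precisely that this argument must be sharp at the threshold: the circulant above shows the bound $\lceil n/r\rceil$ is attained, so any proof has to be essentially tight and cannot tolerate the slack introduced by crude neighborhood-growth or averaging estimates. This sharpness is why the conjecture has resisted proof, and it is why the present paper assumes it as a hypothesis --- deriving from it, via the signature construction described above, the announced properly colored short-cycle result rather than attempting the conjecture itself.
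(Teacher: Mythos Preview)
Your reading is correct: the paper does not prove this statement at all. It is quoted as the Caccetta--H\"aggkvist \emph{Conjecture} and used only as a conditional hypothesis (e.g.\ in Theorem~\ref{CH-co}); the paper explicitly remarks that $r\le 2$ is trivial and $r=3$ is open, and goes no further.

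Your proposal therefore matches the paper's stance exactly, and in fact supplies more than the paper does: the circulant construction for the lower bound $f(n,r)\ge\lceil n/r\rceil$ and the pigeonhole argument for $r=2$ are standard folklore that the paper simply assumes. Your assessment that the upper bound is the open content, and that the paper invokes it rather than establishes it, is precisely right.
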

Conjecture \ref{c1} is trivial when $r\leq2$. The case $r=3$ remains open and there are numerous partial results, see \cite{GRZ,HKN,RAZ,SHEN1,SHEN2}. The best known bound is provided by Hladk\'{y}, Kr\'{a}l' and Norin \cite{HKN} stating that $f(n,3)\leq0.3465n$. For more results, we refer the reader to a survey of  Sullivan \cite{SU}.

Analogously, we can consider the following problem in edge-colored graphs.
\begin{prob}\label{q1}
For all positive integers $n$, $r$ with $n\geq r\geq3$, what is the least integer $f_c(n,r)$ such that every edge-colored graph $G$ on $n$ vertices with $\delta^c(G)\geq f_c(n,r)$ contains a properly colored cycle of length at most $r$?
\end{prob}
By the signatures of oriented graphs, it clearly holds that $f_c(n,r)\geq f(n,r)+1$ for $r\geq 3$. Moreover, in the case $r=3$, $f_c(n,3)>n/2$. So there is a fundamental difference between $f_c(n,3)$ and $f(n,3)$.
In \cite{LI}, Li proved that $f_c(n,3)=\lceil(n+1)/2\rceil$. Here we determine an asymptotically tight upper bound for $f_c(n,r)$ when $r\geq4$.
\begin{theorem}\label{CH-co}
For all integers $r\geq4$, $f_c(n,r)\leq f(n,r)+2\sqrt{n}+1$.
\end{theorem}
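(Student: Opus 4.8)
The plan is to combine the structural result stated in the abstract (every edge-colored graph with no properly colored $K_{s,t}$ contains a spanning subgraph $H$ admitting an orientation $D$ whose directed cycles are all properly colored in $G$) with the definition of the Caccetta--H\"aggkvist function $f(n,r)$. Fix $r\ge 4$ and let $G$ be an edge-colored graph on $n$ vertices with $\delta^c(G)\ge f(n,r)+2\sqrt{n}+1$; I must produce a properly colored cycle of length at most $r$. The key elementary observation is that for any $s,t\ge 2$ a properly colored $K_{s,t}$ contains a properly colored $K_{2,2}$, i.e.\ a properly colored $4$-cycle, which already has length $4\le r$. I would therefore set $s=t=\lceil\sqrt{n}\rceil$ and split into two cases according to whether $G$ contains a properly colored $K_{s,t}$.

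In the first case such a $K_{s,t}$ yields a properly colored $C_4$ and we are done. In the remaining case $G$ has no properly colored $K_{s,t}$, so the structural result supplies a spanning subgraph $H$ with an orientation $D$ in which every directed cycle is a properly colored cycle of $G$. Since $D$ has order $n$, the very definition of $f(n,r)$ guarantees that $D$ contains a directed cycle of length at most $r$ as soon as $\delta^+(D)\ge f(n,r)$; such a cycle is then a properly colored cycle of $G$ of length at most $r$, finishing the proof. Thus the whole argument reduces to the single out-degree estimate $\delta^+(D)\ge f(n,r)$, and no case-specific bound on $f(n,3)$ or the conjecture itself is needed for this unconditional form.

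The crux, and the main obstacle, is to show that the orientation produced loses little out-degree compared with the color degree, namely $d^+_D(v)\ge d^c_G(v)-2\sqrt{n}-1$ for every $v$; combined with the hypothesis $\delta^c(G)\ge f(n,r)+2\sqrt{n}+1$ this gives exactly $\delta^+(D)\ge f(n,r)$. Here the additive $1$ is the unavoidable loss already visible in the signature construction, where property~(1) shows the out-degree drops by one precisely when a vertex has an in-arc (a single ``in-color''). The term $2\sqrt{n}$ is where the absence of a properly colored $K_{s,t}$ enters: I expect that a vertex whose out-degree falls short of $d^c_G(v)-(s-1)-(t-1)$ forces, by a Zarankiewicz/pigeonhole-type count spread over the $n$ vertices, a properly colored $K_{s,t}$ in $G$, contradicting the case hypothesis. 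The resulting deficiency is of order $s+t$ subject to a constraint of the shape $st\gtrsim n$, so by AM--GM it is minimized at $s=t\approx\sqrt{n}$ with value about $2\sqrt{n}$; this is exactly why the balanced choice $s=t=\lceil\sqrt{n}\rceil$ is made. Turning a \emph{local} out-degree deficiency at a single vertex into a \emph{global} properly colored complete bipartite subgraph is the technical heart of the argument, and checking that the constant comes out to precisely $2\sqrt{n}+1$ rather than a larger multiple is the delicate step.
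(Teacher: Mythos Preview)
Your high-level strategy is exactly the paper's: reduce to the case where $G$ has no properly colored $K_{s,t}$ (otherwise a properly colored $C_4$ appears, and $4\le r$), then invoke the orientation lemma and the definition of $f(n,r)$. The gap is entirely in your quantitative analysis of the out-degree loss, and in particular in the choice of $s,t$.

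The loss in passing from $d_G^c(v)$ to $d_D^+(v)$ is \emph{not} of order $s+t$ subject to a constraint $st\gtrsim n$; there is no such constraint in the construction, and that heuristic is what misleads you into taking $s=t=\lceil\sqrt n\rceil$. The precise form of the structural result (Theorem~\ref{co-di}) is that, when $G$ has no properly colored $K_{s,t}$,
\[
d_D^+(v)\;\ge\; d_G^c(v)\;-\;s\Bigl(\tfrac{t-1}{(s-1)!}\Bigr)^{1/s} n^{\,1-1/s}\;-\;(s-1),
\]
where the main term comes from a K\H{o}v\'ari--S\'os--Tur\'an-type count (Lemma~\ref{monorain}) bounding how many edges must be deleted to make the bipartite ``dual'' graph have $d^c\le s-1$ on one side, and the $(s-1)$ is the final clean-up deleting at most $s-1$ colors per vertex. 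With your choice $s=t\approx\sqrt n$ the dominant term $s\,n^{1-1/s}\approx \sqrt n\cdot n^{1-1/\sqrt n}$ is enormously larger than $2\sqrt n$, so the argument fails.

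The correct move is simply $s=t=2$. Then a properly colored $K_{2,2}$ \emph{is} a properly colored $C_4$, so Case~1 is immediate; and in Case~2 the bound above reads $d_D^+(v)\ge d_G^c(v)-2\sqrt n-1$, whence $\delta^+(D)\ge f(n,r)$ directly from the hypothesis. The $2\sqrt n$ in the theorem is thus the $n^{1-1/s}$ exponent at $s=2$, not an AM--GM optimum of $s+t$; once you make this parameter choice, your proof sketch becomes exactly the paper's proof.
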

\begin{remark}
In Section 5, one can see that the term $2\sqrt{n}$ in Theorem \ref{CH-co} can not be replaced by any (sufficiently large) absolute constant when $r=\Theta(n)$.
\end{remark}

Very recently, Seymour and Spirkl \cite{SS} considered a bipartite version of Caccetta-H\"{a}ggkvist Conjecture and proposed the following conjecture in which $g(n,r)$ denotes the least integer such that every bipartite digraph $D$ with $n$ vertices in each part and $\delta^+(D)\geq g(n,r)$ contains a directed cycle of length at most $2r$.
\begin{conjecture}[{\cite{SS}}]\label{seymour1}
For all positive integers $n,r$ with $n\geq r$,  $g(n,r)=\lfloor n/(r+1)\rfloor+1$.
\end{conjecture}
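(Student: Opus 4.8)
\medskip

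\noindent The plan is to treat the two inequalities encoded in $g(n,r)=\lfloor n/(r+1)\rfloor+1$ separately. For the lower bound $g(n,r)\ge\lfloor n/(r+1)\rfloor+1$ I would exhibit a bipartite digraph with $n$ vertices in each part, minimum out-degree $\lfloor n/(r+1)\rfloor$, and no directed cycle of length at most $2r$. The natural candidate is the bipartite blow-up of the directed cycle $\vec{C}_{r+1}$: split each part into $r+1$ blocks $A_0,\dots,A_r$ and $B_0,\dots,B_r$ of size as equal as possible, and insert every arc from $A_i$ to $B_i$ and every arc from $B_i$ to $A_{i+1}$, indices taken modulo $r+1$. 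Each $A\to B$ step then preserves the block index while each $B\to A$ step raises it by one, so a directed cycle can close only after a multiple of $r+1$ double-steps; hence the directed girth is exactly $2(r+1)>2r$. Arranging the block sizes so that the smallest block has size $\lfloor n/(r+1)\rfloor$ makes the minimum out-degree equal to $\lfloor n/(r+1)\rfloor$, which settles this direction.

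The reverse inequality $g(n,r)\le\lfloor n/(r+1)\rfloor+1$ is the substantive direction, and it is where I expect the real difficulty. Here I would argue by contradiction, assuming a bipartite digraph $D$ with parts of size $n$, $\delta^+(D)\ge\lfloor n/(r+1)\rfloor+1$, and no directed cycle of length at most $2r$, and trying to reach a contradiction from the growth of directed out-neighbourhoods. First I would fix a vertex $v$ and study the sets of vertices reachable from $v$ by directed walks of length at most $2k$ for $k\le r$: the hypothesis forces every such walk to be a directed path, so these reachable sets should expand as the out-degree is iterated, and if their growth can be made quantitative enough to exceed the $n$ vertices available in one part before level $2r$, then some walk must revisit a vertex and thereby close a directed cycle of length at most $2r$. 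Complementary routes would be to adapt the spectral method of Hladk\'{y}, Kr\'{a}l' and Norin, which gives the sharpest bounds in the non-bipartite case, or to deduce the statement from a fractional relaxation of Conjecture~\ref{c1} through the correspondence between digraphs and bipartite digraphs.

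The hard part will be exactly the difficulty that keeps Conjecture~\ref{c1} open even for $r=3$: the blow-up above is rigid, so establishing the sharp constant $1/(r+1)$ requires ruling out all configurations lying close to this extremal family, and straightforward neighbourhood counting falls short of the exact constant precisely in this near-extremal regime. For this reason I would not expect a complete proof for every $r$ with current techniques; the realistic targets are the small cases, an asymptotic bound $g(n,r)\le(1+o(1))\,n/(r+1)$, or a proof conditional on Conjecture~\ref{c1}. In the spirit of the present paper even a conditional upper bound is valuable, since combining it with the orientation result, applied to edge-colored bipartite host graphs that contain no properly colored $K_{s,t}$, would yield a color-degree condition forcing a properly colored cycle of length at most $2r$ --- the exact bipartite analogue of Theorem~\ref{CH-co}.
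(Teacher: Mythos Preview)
The statement you were asked to prove is labeled as a \emph{conjecture} in the paper, not a theorem: it is quoted from Seymour and Spirkl \cite{SS} and the paper does not attempt to prove it. Immediately after stating it, the paper records that Seymour and Spirkl themselves verified it only for $r\in\{1,2,3,4,6\}$ and for $r\ge 224539$; the remaining cases are open. So there is no ``paper's own proof'' to compare your proposal against.

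Your proposal is internally honest about this: the lower-bound construction via the bipartite blow-up of $\vec{C}_{r+1}$ is correct and standard, and you rightly flag that the upper bound is the substantive direction and that neighbourhood-growth arguments alone do not reach the sharp constant $1/(r+1)$. Where your write-up drifts is in suggesting that the spectral/flag method of Hladk\'{y}, Kr\'{a}l' and Norin or a reduction to Conjecture~\ref{c1} might close the gap---neither route is known to work here, and the Seymour--Spirkl proof for large $r$ uses a rather different density-increment argument specific to the bipartite setting. In short, there is no gap to name in your reasoning beyond the one you already name yourself: the upper bound is an open problem, and the paper treats it as such.
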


In the same paper \cite{SS}, the authors proved Conjecture \ref{seymour1} when $r = 1,2,3,4,6$ or $r\geq224539$. 
Similarly, let $g_c(n,r)$ be the least integer such that every edge-colored bipartite graph $G$ with $n$ vertices in each part and $\delta^c(G)\geq g_c(n,r)$ contains a properly colored cycle of length at most $2r$. We also obtain the following theorem.
\begin{theorem}\label{CH-bi}
For all integers $r\geq 2$, $g(n,r)+1\leq g_c(n,r)\leq g(n,r)+2\sqrt{n}+1$.
\end{theorem}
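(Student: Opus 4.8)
The plan is to prove Theorem~\ref{CH-bi} in two parts, matching its lower and upper bounds.

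\textbf{The lower bound} $g_c(n,r)\ge g(n,r)+1$ should follow the same signature construction that produced the inequality $f_c(n,r)\ge f(n,r)+1$ in the non-bipartite setting. First I would take a bipartite digraph $D$ with $n$ vertices in each part, minimum out-degree $g(n,r)-1$, and no directed cycle of length at most $2r$; such a $D$ exists by the definition of $g(n,r)$. Passing to its signature $(D,\tau)$, properties (1) and (2) from the excerpt transfer verbatim to the bipartite host graph: by (2) the resulting edge-colored bipartite graph has no properly colored cycle of length at most $2r$, and by (1) each vertex has color degree at least its out-degree, namely at least $g(n,r)-1$. A minor wrinkle is that a vertex of out-degree exactly $g(n,r)-1$ with in-degree $0$ has color degree only $g(n,r)-1$ rather than $g(n,r)$; I would arrange the extremal $D$ so that no out-degree is wasted, or simply note that this yields $\delta^c\ge g(n,r)-1$, whence $g_c(n,r)\ge g(n,r)$, and then sharpen by one using the in-degree contribution exactly as Li did for $r=3$. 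This shows any edge-colored bipartite graph forcing a short properly colored cycle needs color degree strictly larger than $g(n,r)$.

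\textbf{The upper bound} $g_c(n,r)\le g(n,r)+2\sqrt{n}+1$ is the substantive half, and I expect it to mirror the proof of Theorem~\ref{CH-co} almost line for line, now invoking Conjecture~\ref{seymour1} in place of the Caccetta--H\"aggkvist Conjecture. The engine is the structural result promised in the abstract: every edge-colored graph $G$ with no properly colored $K_{s,t}$ contains a spanning subgraph $H$ admitting an orientation $D$ all of whose directed cycles are properly colored cycles of $G$. The strategy is to let $G$ be an edge-colored bipartite graph with $n$ vertices per part and $\delta^c(G)\ge g(n,r)+2\sqrt{n}+1$, assume for contradiction it has no properly colored cycle of length at most $2r$, and in particular no properly colored $K_{s,t}$ for suitable small $s,t$ (say $s=2$, $t=3$, since such a $K_{2,3}$ would itself contain a short properly colored cycle). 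Applying the structural theorem yields a spanning bipartite subgraph $H$ with an orientation $D$ whose directed cycles are all properly colored in $G$.

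\textbf{The main obstacle}, and the reason the $2\sqrt{n}$ term appears, is controlling the out-degree of $D$. The structural theorem guarantees properly-colored-ness of directed cycles but not that $D$ inherits a large minimum out-degree; the orientation typically loses degree, and I would need to show the loss per vertex is at most $2\sqrt{n}$, so that $\delta^+(D)\ge\delta^c(G)-2\sqrt{n}-1\ge g(n,r)$. This is where the quantitative heart of the argument lives: bounding, for each vertex, the gap between its color degree in $G$ and its out-degree in the oriented spanning subgraph $H$, presumably via the $K_{s,t}$-freeness and a K\H{o}v\'ari--S\'os--Tur\'an or double-counting estimate that caps how many incident edges can fail to be oriented outward. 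Once $\delta^+(D)\ge g(n,r)=\lfloor n/(r+1)\rfloor+1$ is secured, Conjecture~\ref{seymour1} (known for the relevant range of $r$, though stated conditionally) forces a directed cycle of length at most $2r$ in $D$; by the defining property of $H$ this is a properly colored cycle of length at most $2r$ in $G$, contradicting our assumption and completing the proof. The bipartite parity automatically makes the short cycle have even length at most $2r$, so no separate length bookkeeping beyond what Conjecture~\ref{seymour1} provides is needed.
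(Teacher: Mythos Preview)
Your approach is essentially the paper's own (the paper omits the proof of Theorem~\ref{CH-bi}, noting it mirrors that of Theorem~\ref{CH-co}), but two small corrections are needed to make it go through exactly.

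First, for the upper bound you should forbid a properly colored $K_{2,2}$, not $K_{2,3}$. A properly colored $K_{2,2}$ \emph{is} a properly colored $C_4$, so if $G$ contains one you are already done (since $4\le 2r$ for $r\ge 2$); otherwise apply Theorem~\ref{co-di} (in its bipartite form, Remark~\ref{r2}) with $s=t=2$. This choice gives $\sigma_{2,2}=2$ and hence the out-degree loss of at most $2\sqrt{n}+1$ that the statement requires; taking $t=3$ would yield $\sigma_{2,3}=2\sqrt{2}$ and a weaker bound than claimed.

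Second, you do not need Conjecture~\ref{seymour1} at all for the upper bound. The theorem is stated in terms of $g(n,r)$, which is \emph{defined} as the threshold forcing a directed cycle of length at most $2r$; once $\delta^+(D)\ge g(n,r)$ is established, the existence of such a cycle follows by definition, unconditionally. (The paper's proof of Theorem~\ref{CH-co} likewise invokes only the definition of $f(n,r)$, not the Caccetta--H\"aggkvist Conjecture.) With these two adjustments your sketch matches the paper's argument.
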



We now provide a unified approach for Theorems \ref{CH-co} and \ref{CH-bi}.
Recall the definition of $(D,\tau)$ and its properties, we attempt to reduce the problem on properly colored cycles to the problem on directed cycles in oriented graphs. From this point, a natural but ambitious question is what condition would guarantee an orientation $D$ of an edge-colored graph $G$ that preserves properties (1) and (2).
Inspired by property (3), we answer this question in a weaker sense as follows.

\begin{theorem}\label{co-di}
For all positive integers $n,s$ and $t$ with $2\leq s\leq t<n$, every edge-colored graph $G$ of order $n$ with no properly colored $K_{s,t}$ contains a spanning subgraph $H$ of $G$ which admits an orientation $D$ satisfying the following.
\begin{description}
  \item [(i)] For each $v\in V(G)$, $d^+_D(v)> d^c_G(v)-\left(\frac{(t-1)}{(s-1)!}\right)^{1/s}sn^{1-1/s}-s$.
  \item [(ii)] Every directed cycle in $D$ is a properly colored cycle in $H$.
\end{description}
\end{theorem}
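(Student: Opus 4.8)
The plan is to build the orientation from a single \emph{receiving colour} per vertex, generalising the signature $(D,\tau)$. Fix a function $\phi\colon V(G)\to\{\text{colours}\}$ (to be chosen later), let $H$ consist of those edges $uv$ for which exactly one of $c(uv)=\phi(u)$, $c(uv)=\phi(v)$ holds, and orient each such edge towards the endpoint whose receiving colour it matches. Then every arc entering a vertex $v$ has colour $\phi(v)$, while every arc leaving $v$ has colour different from $\phi(v)$. Consequently, along any directed cycle, at each vertex the in-arc (colour $\phi(v)$) and the out-arc (colour $\neq\phi(v)$) receive different colours, so the cycle is properly coloured; this gives \textbf{(ii)}. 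This exactly mirrors why the signature satisfies property~(2) in the excerpt, the only difference being that $\phi$ need not be injective.

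For \textbf{(i)}, note that $d^+_D(v)$ equals the number of neighbours $w$ with $c(vw)=\phi(w)\neq\phi(v)$, which is at least the number of colours $c\neq\phi(v)$ present at $v$ that are \emph{captured}, meaning some $c$-neighbour $w$ of $v$ satisfies $\phi(w)=c$. Writing $N_c(v)$ for the set of $c$-neighbours of $v$, the colour $c$ fails to be captured precisely when $\phi(w)\neq c$ for every $w\in N_c(v)$. Hence $d^c_G(v)-d^+_D(v)$ is at most $1$ (for the colour $\phi(v)$) plus the number of uncaptured colours at $v$, and the theorem reduces to choosing $\phi$ so that \emph{every} vertex has at most $\bigl(\tfrac{t-1}{(s-1)!}\bigr)^{1/s}s\,n^{1-1/s}+s-1$ uncaptured colours.

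The structural input enters through the subgraph $P$ of \emph{rigid} edges: those $uv$ whose colour $c(uv)$ occurs exactly once at $u$ and exactly once at $v$. By definition the edges of $P$ meeting any vertex carry distinct colours, so $P$ is properly coloured; since $G$ has no properly coloured $K_{s,t}$ and any $K_{s,t}$ inside a properly coloured graph is itself properly coloured, $P$ is $K_{s,t}$-free. Kővári–Sós–Turán then bounds the density of every subgraph of $P$ by $\tfrac12\bigl((t-1)^{1/s}n^{1-1/s}+(s-1)\bigr)$, and the Hakimi/Frank–Gyárfás orientation theorem lets us orient $P$ with maximum out-degree at most this density. For a rigid edge $uw$ of colour $c$, capturing $c$ for $u$ means forcing $\phi(w)=c$, i.e.\ orienting $uw$ towards $u$; thus orienting $P$ with small out-degree lets $\phi$ capture all but (out-degree)-many rigid colours at each vertex, a quantity already well inside the allotted budget. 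The non-rigid colours (those appearing at least twice at one endpoint) admit low-colour-degree representatives and can be captured by routing $\phi$ through them, consuming the remaining budget.

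The main obstacle is exactly this last coordination: turning the above into a single global choice of $\phi$ with a \emph{per-vertex} (not merely average) guarantee. The clean way I expect to proceed is to model capture as a deficiency version of Hall's theorem on the bipartite incidence between vertices (as servers choosing one colour) and (vertex, colour) demands, and to bound the deficiency of any set by a common-neighbourhood count; a vertex left with more than $\bigl(\tfrac{t-1}{(s-1)!}\bigr)^{1/s}s\,n^{1-1/s}$ uncaptured colours would force $s$ centres sharing $t$ rainbow common neighbours, i.e.\ a properly coloured $K_{s,t}$, contradicting the hypothesis. Extracting the precise constant from this Kővári–Sós–Turán-type estimate, while simultaneously meeting the requirement at all vertices and reconciling the rigid and non-rigid contributions, is the delicate point; the rest is bookkeeping around the construction above.
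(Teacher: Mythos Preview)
Your receiving-colour framework for \textbf{(ii)} is exactly the right picture, and it is essentially what the paper does, up to one important difference: the paper assigns to each vertex a \emph{set} $\Phi(v)$ of at most $s-1$ receiving colours, not a single colour $\phi(v)$. (Concretely, it passes to the bipartite dual $\widehat G=(V_1,V_2)$, applies a structural lemma to obtain a $V_1$-proper subgraph $H_0$ with $d^c_{H_0}(v^{(2)})\le s-1$ for every $v^{(2)}\in V_2$, and then lets $\Phi(v)=C_{H_0}(v^{(2)})$.) With your single $\phi$ you are implicitly running the $s=2$ case; for $s>2$ the extra $s-2$ slots are what produce the $(s-1)!$ in the constant.

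More seriously, the bridge you build between the orientation of the rigid graph $P$ and the choice of $\phi$ does not close. You orient $P$ with max out-degree at most $k\approx\tfrac12(t-1)^{1/s}n^{1-1/s}$ and declare that ``orienting $uw$ towards $u$'' means $\phi(w)=c(uw)$; but a single vertex $w$ will typically have many in-arcs in this orientation, each demanding a \emph{different} value for $\phi(w)$, and you can honour only one. Bounding out-degree controls how many rigid colours each $v$ is willing to lose, but it says nothing about whether the remaining demands can be met by a consistent $\phi$; conversely, bounding in-degree would make $\phi$ (or a small $\Phi$) definable but would leave out-degree---hence the loss at $v$---uncontrolled. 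The Hall-deficiency patch you sketch inherits the same problem: the assertion ``a vertex with more than $\sigma_{s,t}n^{1-1/s}$ uncaptured colours would force $s$ centres sharing $t$ rainbow common neighbours'' is about a \emph{specific} $\phi$, whereas what you need is the existence of \emph{some} $\phi$ with small deficiency everywhere; the two are not interchangeable, and no per-vertex bound follows.

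The paper sidesteps this tension with a greedy saturation argument in the dual graph. One grows $U_\ell=\{v_1,\dots,v_\ell\}\subseteq V_1$ by repeatedly adding a vertex that still has at least $x$ neighbours in $V_2$ whose colour towards it is new (outside the $\le s-2$ colours already seen at that neighbour). A double count shows $\ell\le (s-1)n/x$, and for each $u\in V_1$ at most $(t-1)\binom{\ell}{s-1}+x$ incident edges have colour outside the $\le s-1$ colours recorded at their $V_2$-endpoint---otherwise one finds $s$ centres with $t$ common rainbow neighbours, a properly coloured $K_{s,t}$. Optimising $x$ gives the loss $\sigma_{s,t}n^{1-1/s}$ simultaneously at every vertex; the resulting set of $\le s-1$ colours per $V_2$-vertex is then pulled back to $G$ as $\Phi(v)$, and a further deletion of at most $s-1$ edges per vertex (to avoid $c(e)\in\Phi(v)$ on out-edges) yields the orientation. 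The key idea you are missing is precisely this greedy construction: it simultaneously chooses the receiving sets and bounds the loss per vertex, rather than trying to decouple the two via an orientation of $P$ and a matching argument.
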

\begin{remark}\label{r2}
From the proof of Theorem \ref{co-di}, we can also derive that for every edge-colored bipartite graph $G=(V_1,V_2,E)$ with $|V_i|=n_i$ for each $i\in \{1,2\}$, the above statement still holds when we replace (\textbf{i}) with the following assertion.
\begin{center}
\emph{For each $v\in V_i$ with $i\in \{1,2\}$, $d^+_D(v)> d^c_G(v)-\left(\frac{(t-1)}{(s-1)!}\right)^{1/s}sn_{3-i}^{1-1/s}-s$}.
\end{center}
\end{remark}
Using Theorem \ref{co-di}, we now provide a short proof of Theorem \ref{CH-co}. Since the proof of Theorem \ref{CH-bi} is very similar to that of Theorem \ref{CH-co}, we omit it here.
\begin{proof}[{\textbf{Proof of Theorem \ref{CH-co}}}]
Let $G$ be an edge-colored graph with $\delta^c(G)\geq f(n,r)+2\sqrt{n}+1$. We may assume that $G$ contains no properly colored $K_{2,2}$. Applying Theorem \ref{co-di} with $s=t=2$, we obtain a subgraph $H$ and an orientation $D$ with $\delta^+(D)\geq f(n,r)$ such that any directed cycle in $D$ corresponds to a properly colored cycle in $H$. Therefore Theorem \ref{CH-co} easily follows from the definition of $f(n,r)$.
\end{proof}


The rest of the paper is organized as follows. In order to prove Theorem \ref{co-di}, we provide a crucial lemma (Lemma \ref{monorain}) in which we partially describe the typical structure of an edge-colored graph with no properly colored $K_{s,t}$ for any given integers $s,t$ with $t\geq s\geq2$. The proofs of Lemma \ref{monorain} and Theorem \ref{co-di} are presented in Section $3$. In Section $4$, we give more applications of Theorem \ref{co-di} and Lemma \ref{monorain}. Particularly, for all integers $s,t$ with $t\geq s\geq2$, we obtain asymptotically tight color degree conditions forcing a properly colored (or rainbow) $K_{s,t}$. Finally, some comments and open problems are proposed in Section $5$.





\section{Basic Notation}
Given an edge-colored graph $G$, for every edge $uv\in E(G)$, let $c_G(uv)$ be the color of $uv$. For each vertex $v\in V(G)$, denote by $C_G(v)$ the set of colors appearing on the incident edges of $v$. For any two disjoint vertex sets $U$ and $V$, let $G[U,V]$ be the subgraph induced by all the edges between $U$ and $V$. Let $C_G(U,V)$ be the set of all colors appearing on the edges between $U$ and $V$. If $G$ is an edge-colored bipartite graph with two parts $V_1, V_2$ and all the incident edges of each $v\in V_1$ have pairwise distinct colors, then we say $G$ is \emph{$V_1$-proper}. We say that $G$ is \emph{pseudo} $V_1$-\emph{canonical} if all edges incident to each $v\in V_1$ have the same color. If the graph $G$ is clear from the context, then the subscripts are usually omitted.

The \emph{dual graph} $\widehat{G}=(V_1,V_2;\widehat{E})$ of $G$ is defined as follows.
(i) Let $V_1=\{v^{(1)}~|~v\in V(G)\}$ and $V_2=\{v^{(2)}~|~v\in V(G)\}$, respectively; (ii) For all edges $uv\in E(G)$, add two edges $u^{(1)}v^{(2)}$, $v^{(1)}u^{(2)}$ in $\widehat{G}$ and assign $u^{(1)}v^{(2)}$, $v^{(1)}u^{(2)}$ the same color $c_G(uv)$.
It is easy to see that $d^c(v)=d^c(v^{(1)})=d^c(v^{(2)})$ for all vertices $v\in V(G)$. In particular, we observe the following simple but subtle fact.
\begin{fact}\label{f1}
For all positive integers $s$ and $t$, the dual graph $\widehat{G}$ contains a properly colored (or rainbow) $K_{s,t}$ if and only if $G$ contains a properly colored (or rainbow) $K_{s,t}$.
\end{fact}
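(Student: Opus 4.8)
The plan is to check both implications through the edgewise correspondence between $G$ and $\widehat{G}$; the only real content is a disjointness check in the harder direction. I will repeatedly use that, by construction, for vertices $p,q$ of $G$ one has $p^{(1)}q^{(2)}\in\widehat{E}$ if and only if $pq\in E(G)$, and in that case $c_{\widehat{G}}(p^{(1)}q^{(2)})=c_G(pq)$.

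For the direction ``$G$ contains a properly colored (resp. rainbow) $K_{s,t}$ $\Rightarrow$ $\widehat{G}$ does'', suppose $G$ has such a $K_{s,t}$ with parts $S=\{x_1,\dots,x_s\}$ and $T=\{y_1,\dots,y_t\}$. I send $S$ to $S^{(1)}=\{x_1^{(1)},\dots,x_s^{(1)}\}\subseteq V_1$ and $T$ to $T^{(2)}=\{y_1^{(2)},\dots,y_t^{(2)}\}\subseteq V_2$. These sets lie in different parts of $\widehat{G}$, hence are disjoint, and each edge $x_iy_j$ of $G$ produces the edge $x_i^{(1)}y_j^{(2)}\in\widehat{E}$ of the same color. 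Thus $S^{(1)}\cup T^{(2)}$ spans a $K_{s,t}$ in $\widehat{G}$ with exactly the original color pattern, so it is again properly colored (resp. rainbow).

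The reverse direction is the subtle one, and I would split it into three steps. First, since $\widehat{G}$ is bipartite with parts $V_1,V_2$, every $K_{s,t}\subseteq\widehat{G}$ has each of its two sides contained in a single part: if one side contained $p\in V_1$ and $q\in V_2$, then any vertex $r$ on the other side, being adjacent to both, would have to lie in $V_2$ (because of $pr$) and in $V_1$ (because of $qr$), which is impossible. So I may write the two sides as $A=\{a_1^{(1)},\dots\}\subseteq V_1$ and $B=\{b_1^{(2)},\dots\}\subseteq V_2$, of sizes $s$ and $t$ in some order (the two cases are symmetric as $K_{s,t}\cong K_{t,s}$). Second, I pull back to the sets $A'=\{a_1,\dots\}$ and $B'=\{b_1,\dots\}$ in $G$; the vertices within $A'$ and within $B'$ are distinct, and the crucial point is that $A'\cap B'=\emptyset$. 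This holds because completeness gives $a_i^{(1)}b_j^{(2)}\in\widehat{E}$ for all $i,j$, hence $a_ib_j\in E(G)$ and therefore $a_i\neq b_j$ (as $G$ is simple and has no loops); since this is true for every pair $i,j$, the two sets are disjoint. Third, $A'\cup B'$ then spans a genuine $K_{s,t}$ in $G$ whose colors satisfy $c_G(a_ib_j)=c_{\widehat{G}}(a_i^{(1)}b_j^{(2)})$, so its color pattern agrees with the copy in $\widehat{G}$ and it is properly colored (resp. rainbow).

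The one place where care is needed---and where I expect the stated ``subtlety'' to sit---is the disjointness $A'\cap B'=\emptyset$ in the second step: a priori the bipartite doubling $\widehat{G}$ could fold a $K_{s,t}$ onto overlapping vertices of $G$, destroying the complete bipartite structure there. Completeness of the $K_{s,t}$ together with simplicity of $G$ is exactly what rules this out; everything else is a color-preserving relabeling, and the degree identity $d^c(v)=d^c(v^{(1)})=d^c(v^{(2)})$ is not even needed.
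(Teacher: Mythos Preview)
Your proof is correct. The paper itself does not supply a proof of this fact; it is stated as a ``simple but subtle fact'' and left to the reader, so there is no argument to compare against in detail. Your write-up identifies exactly the subtle point the authors allude to: in the pull-back direction, the preimage sets $A'$ and $B'$ in $G$ could in principle overlap (since $v^{(1)}$ and $v^{(2)}$ are distinct in $\widehat{G}$ but project to the same $v$), and it is the completeness of the $K_{s,t}$ together with loop-freeness of $G$ that forces $A'\cap B'=\emptyset$. Your first step in the reverse direction (that each side of the $K_{s,t}$ lies entirely in one part of $\widehat{G}$) is also correctly justified and implicitly uses $s,t\ge 1$.
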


Given an oriented graph $D$ and a positive integer $k$, the \emph{$k$-blow-up} of $D$ is an oriented graph obtained from $D$ by replacing each vertex $v_i\in V(D)$ with an independent vertex set $V_i$ of size $k$ and adding all possible arcs from $V_i$ to $V_j$ for every arc $(v_i,v_j)$ in $D$. 





\section{Proof of Theorem \ref{co-di}}
For all integers $t\geq s\geq2$, let $\sigma_{s,t}=s\left(\frac{t-1}{(s-1)!}\right)^{1/s}$. We start our proof with the following lemma. 

\begin{lemma}\label{monorain}
Let $2\leq s\leq t$ be positive integers and $G=(V_1,V_2;E)$ be a bipartite graph with $|V_1|=n_1$ and $|V_2|=n_2$. If $G$ contains no properly colored $K_{s,t}$, then $G$ contains a spanning subgraph $H$ with $d^c_H(u)\geq d_G^c(u)- \sigma_{s,t} n_2^{1-1/s}$ for each $u\in V_1$ and $d^c_H(v)\leq s-1$ for each $v\in V_2$. In particular, if $s=2$, then $H$ is a pseudo $V_2$-canonical graph.
\end{lemma}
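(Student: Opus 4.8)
The plan is to obtain $H$ as an extremal subgraph and then bound the colour loss at each vertex of $V_1$ by a Kővári--Sós--Turán--type count. Concretely, I would consider all spanning subgraphs $H'\subseteq G$ with $d^c_{H'}(v)\le s-1$ for every $v\in V_2$, and among them choose one, $H$, maximizing the total preserved colour $\sum_{u\in V_1}d^c_{H}(u)$. Such an $H$ is described by choosing for each $v\in V_2$ a set $S_v\subseteq C_G(v)$ of retained colours with $|S_v|\le s-1$ and keeping exactly the edges at $v$ whose colour lies in $S_v$. Since enlarging any $S_v$ only adds edges and hence never decreases any $d^c(u)$, a maximizer satisfies $|S_v|=\min(s-1,d^c_G(v))$ for all $v$. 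In particular, when $s=2$ every non-isolated $v\in V_2$ retains exactly one colour, so $H$ is pseudo $V_2$-canonical; this settles the last sentence of the lemma. It therefore remains to prove the per-vertex bound $d^c_G(u)-d^c_H(u)<\sigma_{s,t}\,n_2^{1-1/s}$ for an arbitrary fixed $u_0\in V_1$.

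Next I would read off a clean witness structure from maximality. Let $L$ be the set of colours lost at $u_0$, i.e.\ colours $c\in C_G(u_0)$ appearing on no retained edge at $u_0$, and put $\ell=|L|$. For each $c\in L$ pick a neighbour $v$ with $c_G(u_0v)=c$; distinct colours force distinct such vertices, so we obtain $\ell$ distinct witnesses $v_1,\dots,v_\ell\in V_2$ with $c_G(u_0v_i)$ pairwise distinct, that is, $u_0$ is rainbow-joined to $W:=\{v_1,\dots,v_\ell\}$. Moreover $c_i:=c_G(u_0v_i)\notin S_{v_i}$ while $|S_{v_i}|=s-1$, so each witness satisfies $d^c_G(v_i)\ge s$; in particular every $v_i$ sends at least $s-1$ edges of distinct colours, all different from $c_i$, to $V_1\setminus\{u_0\}$. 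The goal is then to show that $\ell\ge\sigma_{s,t}\,n_2^{1-1/s}$ would force $G$ to contain a properly coloured $K_{s,t}$, contradicting the hypothesis.

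To produce this $K_{s,t}$ I would argue by a double count in the spirit of Kővári--Sós--Turán, building it with its $s$-side anchored at $u_0$ and its $t$-side chosen inside $W$. On one hand, the absence of a properly coloured $K_{s,t}$ caps the number of admissible configurations --- roughly, an $(s-1)$-subset of $V_2$ together with a compatible common-neighbour extension --- by $(t-1)\binom{n_2}{s-1}$, whose leading term $\tfrac{t-1}{(s-1)!}n_2^{s-1}$ is the source of both the factor $(s-1)!$ and the exponent $1-1/s$. On the other hand, grouping the $\ell$ witnesses into $s$ colour-coherent blocks of nearly equal size and choosing one witness per block produces at least $(\ell/s)^s$ admissible configurations, which is where the factor $s^s$ enters. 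Hence $\ell\ge\sigma_{s,t}\,n_2^{1-1/s}$ would give $(\ell/s)^s\ge\tfrac{t-1}{(s-1)!}n_2^{s-1}\ge(t-1)\binom{n_2}{s-1}$, exceeding the cap and forcing the forbidden $K_{s,t}$; therefore $\ell<\sigma_{s,t}\,n_2^{1-1/s}$, as required.

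The step I expect to be the main obstacle is exactly the one where plain Kővári--Sós--Turán does not apply verbatim: a properly coloured $K_{s,t}$ demands properness \emph{simultaneously} at the vertices of both sides, whereas a naive common-neighbour count only controls properness at one side. The rainbow join of $u_0$ to $W$ handles properness at $u_0$ for free, but I would still need to arrange the $s-1$ auxiliary $V_1$-vertices and the $t$ chosen witnesses so that no two of the $s$ colours meeting at any selected witness coincide and so that each auxiliary vertex is rainbow to the chosen $t$-set. This is precisely what the colour-coherent grouping of the witnesses is meant to enforce, and making that bookkeeping rigorous --- together with tracking the lower-order error terms so that the constant sharpens to $\sigma_{s,t}$ rather than merely $O(n_2^{1-1/s})$ --- is where the real work lies. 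For the bipartite refinement of Remark~\ref{r2} the same argument applies with $V_1,V_2$ in asymmetric roles, the only change being that the witnesses live in the opposite part, so that $n_2$ is replaced by $n_{3-i}$.
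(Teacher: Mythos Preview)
Your plan has a genuine gap at the decisive step. You fix a single vertex $u_0$, collect its $\ell$ witnesses $v_1,\dots,v_\ell\in V_2$, and then propose a K\H{o}v\'ari--S\'os--Tur\'an style count to force a properly coloured $K_{s,t}$. But this count is never set up coherently: your ``admissible configurations'' are described as $(s-1)$-subsets of $V_2$, yet the $s$-side of the $K_{s,t}$ you want to build lies in $V_1$; and the claimed lower bound $(\ell/s)^s$, obtained by picking one witness per ``colour-coherent block'', selects $s$ vertices from $V_2$ rather than the $t$ that are actually needed there. Even granting a repair of this bookkeeping, two obstacles remain. First, nothing in your setup controls properness on the $V_1$-side: the witnesses only certify that each $v_i$ sees $s$ distinct colours, not that any candidate $(s-1)$-set of auxiliary $V_1$-vertices is rainbow to the chosen $t$-set. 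Second, a direct double count over $(s-1)$-subsets of $V_1\setminus\{u_0\}$ would yield at best $(t-1)\binom{n_1-1}{s-1}$, producing an error term in $n_1$ rather than the required $n_2^{1-1/s}$. Finally, choosing $H$ as a maximiser of $\sum_{u}d^c_H(u)$ gives no a priori per-vertex control; the lemma demands a bound at \emph{every} $u\in V_1$, and global maximality does not preclude one vertex absorbing all the loss.

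The paper's proof sidesteps all of this with two ideas absent from your outline. It first passes to a $V_1$-proper spanning subgraph $G_0$ with $d_{G_0}(u)=d^c_G(u)$ for each $u\in V_1$; this makes properness on the $V_1$-side automatic, so that $t$ common neighbours in $V_2$, each the centre of a rainbow $s$-star on a fixed $s$-set $T\cup\{u\}\subseteq V_1$, already give a properly coloured $K_{s,t}$. Second, instead of an abstract maximiser, it builds $H$ via a \emph{greedy} sequence $U_l=\{v_1,\dots,v_l\}\subseteq V_1$: each $v_i$ is added only while it still has at least $x$ neighbours in $V_2$ that are not yet ``saturated'' (do not yet see $s-1$ colours from $U_{i-1}$). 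A degree count in an auxiliary bipartite digraph then gives $l\le (s-1)n_2/x$, so the witnessing $(s-1)$-subsets $T$ are confined to $U_l$, a set of size $O(n_2/x)$ rather than all of $V_1$. This is precisely what converts the K\H{o}v\'ari--S\'os--Tur\'an cap into $(t-1)\binom{l}{s-1}+x$, which after optimising $x$ yields $\sigma_{s,t}n_2^{1-1/s}$ uniformly for every $u\in V_1$.
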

\begin{proof}[\textbf{\emph{Proof}}]
At the beginning, we extract a $V_1$-proper subgraph $G_0$ from $G$ such that $d_{G_0}(v)=d_G^c(v)$ for each $v\in V_1$. Then in the rest of the proof, we only consider $G_0$.

Given a subset $A\subseteq V_1$, a vertex $v\in V_2$ is \emph{saturated} by $A$ if $|C(v,A)|\geq s-1$, and let $S(A)=\{v\in V_2~|~v \text{~is saturated by~} A\}$. Let $U_l=\{v_1,v_2,\ldots,v_l\}\subseteq V_1$ be a maximal set such that
for each $i\in [l]$, $v_i$ has at least $x$ neighbors $v$ with $v\notin S(U_{i-1})$ and $c(vv_i)\notin C(v,U_{i-1})$,
where $U_0=\emptyset$, $U_{i-1}=\{v_1,v_2,\ldots,v_{i-1}\}$ for $2\leq i\leq l$ and $x$ is an integer to be determined later.
In addition, for each $v\in S(U_l)$, let $U_v=U_i$ if $v\in S(U_i)\setminus S(U_{i-1})$ for some $i\in[l]$ and $U_v=U_l$ if $v\in V_2\setminus S(U_l)$. Therefore $|C(v,U_v)|=s-1$ for any $v\in S(U_l)$ and $|C(v,U_v)|\leq s-2$ for any $v\in V_2\setminus S(U_l)$.
Let $H$ be the subgraph of $G_0$ obtained as follows. For each $u\in V_1$, we delete all incident edges $uv$ with $c(uv)\notin C(v,U_{v})$. Next, we show that $H$ is the desired subgraph.

For all $v\in V_2$, we know that $d_H^c(v)=|C(v,U_v)|\leq s-1$. Hence, it remains to prove that $d^c_H(u)\geq d_G^c(u)-\sigma_{s,t} n_2^{1-1/s}$ for every $u\in V_1$. Now we bound the number of incident edges of each $u\in V_1$ which are deleted as described above.

\begin{claim}
For each $u\in V_1$, we deleted at most $(t-1)\binom{l}{s-1}+x$ incident edges.
\end{claim}
\begin{proof}
By the maximality of $U_l$, we know that each $u\in V_1\setminus U_l$ has less than $x$ neighbors $v$ outside $S(U_l)$ with $c(uv)\notin C(v,U_{v})$. So it suffices to prove that each $u\in V_1$ has at most $(t-1)\binom{l}{s-1}$ neighbors $v$ inside $S(U_l)$ with $c(uv)\notin C(v,U_{v})$. In this case, for such a neighbor $v$, there exists a subset $T\subset U_v$ of size $s-1$ such that $T\cup \{u,v\}$ induces a rainbow star centered at $v$. Since $G$ contains no properly colored $K_{s,t}$, $T\cup \{u\}$ has at most $t-1$ such common neighbors as $v$. Therefore, by double counting, each $u$ has at most $(t-1)\binom{|U_l|}{s-1}$ neighbors $v$ inside $S(U_{l})$ with $c(uv)\notin C(v, U_v)$.
\end{proof}
Next, we obtain an upper bound on $l$.
\begin{claim}
$l\leq \frac{(s-1)n_2}{x}.$
\end{claim}
\begin{proof}
We construct a bipartite digraph $D$ between $U_l$ and $V_2$ as follows. For each $v_i\in U_l$ and $v\in V_2$, we add an arc from $v_i$ to $v$ if and only if $v\in N(v_i)\setminus S(U_{i-1})$ and $c(vv_i)\notin C(v,U_{i-1})$.
By the definition of $U_l$, we know that $d^+(v_i)\geq x$ for each $v_i\in U_l$. Since $|C(v,U_l)|\leq s-2$ for each $v\in V_2\setminus S(U_{l})$ and $|C(v,U_i)|= s-1$ for each $v\in S(U_i)\setminus S(U_{i-1})$ ($i\in [l]$), we obtain that $d^-(v)=s-1$ for $v\in S(U_{l})$ and $d^-(v)\leq s-2$ for $v\in V_2\setminus S(U_{l})$. Therefore,
$$lx\leq|E(D)|\leq (s-1)|S(U_l)|+(s-2)(n_2-|S(U_l)|),$$
which implies that
$l\leq \frac{(s-1)n_2}{x}$.
\end{proof}
By the two claims above, it holds that for each $u\in V_1$,
\begin{align}
d^c_H(u)&\geq d^c_G(u)-(t-1)\binom{\frac{(s-1)n_2}{x}}{s-1}-x\nonumber \\
&\geq d^c_G(u)-\frac{t-1}{(s-1)!}\left(\frac{(s-1)n_2}{x}\right)^{s-1}-x.\nonumber
\end{align}
Let $x=(s-1)\left(\frac{t-1}{(s-1)!}\right)^{1/s}n_2^{1-1/s}$, and it follows that
$d^c_H(u)\geq d^c_G(u)-\left(\frac{t-1}{(s-1)!}\right)^{1/s}sn_2^{1-1/s}$ for each $u\in V_1$.
This completes the proof of Lemma \ref{monorain}.
\end{proof}

With Lemma \ref{monorain}, we are ready to prove Theorem \ref{co-di}.
\begin{proof}[{\textbf{Proof of Theorem \ref{co-di}.}}]
Let $G$ be an edge-colored graph of order $n$ with no properly colored $K_{s,t}$. By Fact \ref{f1}, the dual graph $\widehat{G}=(V_1,V_2;\widehat{E})$ also contains no properly colored $K_{s,t}$. Therefore, by Lemma \ref{monorain}, there exists a $V_1$-proper subgraph $H_0\subseteq\widehat{G}$ such that $d^c_{H_0}(u)\geq d^c_G(u)-\sigma_{s,t}n^{1-1/s}$ for each $u\in V_1$ and $d^c_{H_0}(v)\leq s-1$ for each $v\in V_2$. Let $H'$ be the subgraph of $H_0$ obtained by deleting all edges $e$ incident to $v^{(1)}$ with $c(e)\in C_{H_0}(v^{(2)})$ for each $v\in V(G)$ in order. 
Recall that for each edge $uv\in E(G)$, $c(u^{(1)}v^{(2)})= c(v^{(1)}u^{(2)})$ in $\widehat{G}$. Hence, at most one of $u^{(1)}v^{(2)}, v^{(1)}u^{(2)}$ is included in the graph $H'$. Since $d^c_{H_0}(v)\leq s-1$ for each $v\in V_2$, we have $d^c_{H'}(u)\geq d^c_G(u)-\sigma_{s,t}n^{1-1/s}-(s-1)$ for each $u\in V_1$. Let $H$ be a spanning subgraph of $G$ with $E(H)=\{uv\in E(G)~|~u^{(1)}v^{(2)}\in E(H')\}$.

Now we show that the orientation $D$, defined on $H$ by orienting $uv\in E(H)$ from $u$ to $v$ if $u^{(1)}v^{(2)}\in E(H')$, is as desired.
First of all, one can observe that any two consecutive edges of a directed path in $D$ have different colors. Otherwise there are three vertices $u,v,w\in V(H)$ such that both $u^{(1)}v^{(2)}$ and $v^{(1)}w^{(2)}$ belong to $E(H')$ while $c(u^{(1)}v^{(2)})=c(v^{(1)}w^{(2)})\in C_{H_0}(v^{(2)})$. It follows that $v^{(1)}w^{(2)}$ is deleted from $H_0$, which is a contradiction. Therefore property (\textbf{ii}) holds. For each vertex $v\in V(H)$, we know that $d^+_D(v)=d^c_{H'}(v^{(1)})\geq d^c_G(v)-\sigma_{s,t}n^{1-1/s}-(s-1)$ and $d^-_D(v)=d_{H'}(v^{(2)})$, so property (\textbf{i}) holds. 
\end{proof}
\section{More Applications of Theorem \ref{co-di} and Lemma \ref{monorain}}
\subsection{Vertex-disjoint Cycles}
For any positive integer $k$, let $f(k)$ be the smallest integer so that every digraph of minimum
out-degree at least $f(k)$ contains $k$ vertex disjoint directed cycles. The well-known Bermond-Thomassen Conjecture \cite{BT} states that $f(k)=2k-1$ for all $k\geq1$, and it is true when $k\leq3$ \cite{PS,CT}. Motivated by Bermond-Thomassen Conjecture, Lichiardopol \cite{lic} proposed the following conjecture regarding vertex disjoint directed cycles of different lengths.

\begin{conjecture}\label{lich}\emph{\cite{lic}}
For every integer $k\geq 2$, there is an integer $g(k)$ such that any digraph with minimum out-degree at least $g(k)$ contains $k$ vertex disjoint cycles of different lengths.
\end{conjecture}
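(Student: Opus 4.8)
The plan is to attack Conjecture \ref{lich} exactly as worded, i.e. to prove the \emph{existence} of a finite threshold $g(k)$ for each fixed $k$ (the conjecture asks only for existence, not for a sharp value in the spirit of the Bermond-Thomassen bound $2k-1$ from \cite{BT}). I would proceed by induction on $k$. The base case $k=1$ is immediate: any digraph with $\delta^+\geq 1$ contains a directed cycle, so $g(1)=1$ works. For the inductive step I would choose $g(k)$ much larger than $g(k-1)$ and aim to peel off a single cycle of a controlled length while retaining enough minimum out-degree to invoke the hypothesis for $k-1$.

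The first ingredient I would exploit is that large minimum out-degree forces a rich supply of distinct cycle lengths. Taking a longest directed path $u_0u_1\cdots u_p$ in $D$, every out-neighbour of the endpoint $u_p$ must lie on the path (otherwise the path extends), and each out-neighbour $u_i$ yields a cycle $u_iu_{i+1}\cdots u_pu_i$ of length $p-i+1$; distinct indices give distinct lengths, so $\delta^+(D)\geq d$ already produces at least $d$ cycles through the common vertex $u_p$ of pairwise different lengths. In parallel, Alon's theorem that minimum out-degree at least a linear function of $m$ forces $m$ vertex-disjoint (but not necessarily distinct-length) cycles would, once $g(k)$ is large, supply a large disjoint family $B_1,\dots,B_m$.

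Combining these, if $B_1,\dots,B_m$ already realise $k$ distinct lengths we are done; otherwise pigeonhole yields many $B_i$ of a common length $\ell$, and the problem reduces to converting some of these equal-length disjoint cycles into cycles of new lengths while preserving disjointness. Two routes are conceivable: one may \emph{reroute} along arcs running between two same-length cycles (high out-degree makes such connecting arcs abundant) to splice out a cycle of length $\neq \ell$; or one may delete a single \emph{short} cycle $C$, apply the induction hypothesis to $D-V(C)$, and choose $|C|$ different from the $k-1$ lengths produced, which the longest-path argument leaves ample room to do.

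The main obstacle, and the reason this remains a conjecture, is precisely this reconciliation. Deleting a cycle's vertices can drop the minimum out-degree by as much as the cycle's length, so the deletion route requires a short cycle; yet minimum out-degree alone forces only long cycles when the order $n$ is large (a single long directed cycle has $\delta^+=1$ and girth $n$), so no short cycle need exist. The rerouting route sidesteps sequential deletion but must guarantee that equal-length cycles can always be merged into a genuinely new length without consuming too many cycles or colliding with lengths already used — controlling these length collisions simultaneously across all $m$ cycles is exactly where the difficulty concentrates, and where a new idea beyond Theorem \ref{co-di} (which speaks to the edge-coloured, not the purely digraphic, setting) would be required.
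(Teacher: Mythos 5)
The statement you were asked to prove is Conjecture~\ref{lich}, which the paper reproduces verbatim from \cite{lic} and never proves: it is used only as a hypothesis in part (ii) of Theorem~\ref{kcycle} (``if Conjecture~\ref{lich} is true, then\ldots''). So there is no proof in the paper to compare against, and your submission is, by your own closing admission, not a proof either --- it is a problem analysis that stalls at exactly the known obstruction. That honesty is to your credit, but it means the proposal cannot be accepted as establishing the statement.

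To name the gap concretely: your induction needs, at each step, a directed cycle $C$ whose length avoids the $k-1$ lengths that the inductive hypothesis will produce in $D-V(C)$, \emph{and} whose removal decreases the minimum out-degree by at most a bounded amount, i.e.\ a short cycle. But minimum out-degree imposes no upper bound on girth (the complete bipartite digraph with all arcs in both directions has girth $2$, but a blow-up of a long directed cycle, or the directed cycle $\vec{C}_n$ itself with $\delta^+=1$, shows girth can be as large as $n/\delta^+$-type quantities; more simply, iterated blow-ups give digraphs of large out-degree and girth), so the short cycle your deletion route requires need not exist. Your longest-path argument does produce $\delta^+(D)$ cycles of pairwise distinct lengths, but they all pass through the common endpoint $u_p$, so at most one of them can enter a vertex-disjoint family --- this supplies distinct lengths or disjointness, never both. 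The rerouting route (splicing arcs between equal-length cycles from Alon's linear bound for the Bermond--Thomassen function) is stated only as a hope, with no lemma guaranteeing that a new length can be manufactured without colliding with the lengths already used; controlling those collisions simultaneously is the entire difficulty, as you yourself observe. In short, the proposal identifies the right ingredients and the right obstacle, but contributes no mechanism to overcome it, and the statement remains what the paper says it is: a conjecture.
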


Using Theorem \ref{co-di}, we obtain the following result on vertex disjoint properly colored cycles. Denote by $C_4(G)$ a maximum set of vertex disjoint properly colored $C_4$'s in the edge-colored graph $G$.

\begin{theorem}\label{kcycle}
For all positive integers $n,k$, let $G$ be an edge-colored graph on $n$ vertices. Then the following hold.
\begin{description}
  \item[(i)] If $\delta^c(G)\geq f(k-|C_4(G)|)+4|C_4(G)|+2\sqrt{n}+1$, then $G$ contains $k$ vertex disjoint properly colored cycles. In particular, if the Bermond-Thomassen Conjecture is true, then $\delta^c(G)\geq4k+2\sqrt{n}+1$ suffices.
  \item[(ii)] If Conjecture \ref{lich} is true, then $\delta^c(G)\geq g(k)+(k+1)(k+n^{1-1/(k+1)})$ suffices to ensure $k$ vertex disjoint properly colored cycles of different lengths.
\end{description}
\end{theorem}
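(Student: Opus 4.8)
The plan is to treat both parts through the device already used for Theorem~\ref{CH-co}: convert a color-degree hypothesis into an out-degree hypothesis via Theorem~\ref{co-di}, then invoke the relevant digraph conjecture. For part~\textbf{(i)} I would set $m=|C_4(G)|$ and fix a maximum family $\mathcal{C}$ of vertex-disjoint properly colored $C_4$'s. If $m\ge k$ we are done, so assume $m<k$. Deleting $V(\mathcal{C})$ produces a subgraph $G'$ on $n-4m$ vertices which, by maximality of $\mathcal{C}$, contains no properly colored $C_4$, i.e. no properly colored $K_{2,2}$; moreover $\delta^c(G')\ge\delta^c(G)-4m$. Applying Theorem~\ref{co-di} to $G'$ with $s=t=2$ (so $\sigma_{2,2}=2$) gives an orientation $D$ of a subgraph of $G'$ with $\delta^+(D)\ge f(k-m)$ (exactly the rounding in the proof of Theorem~\ref{CH-co}), every directed cycle of which is a properly colored cycle of $G'$. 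By the definition of $f$, $D$ has $k-m$ vertex-disjoint directed cycles, hence $k-m$ vertex-disjoint properly colored cycles in $G'$; with the $m$ squares of $\mathcal{C}$ these give $k$ vertex-disjoint properly colored cycles. For the ``in particular'' clause, substituting $f(k-m)=2(k-m)-1$ turns the requirement into $2k+2m+2\sqrt{n}$, which is at most $4k+2\sqrt{n}+1$ whenever $m\le k-1$; as $m\ge k$ is trivial, $\delta^c(G)\ge 4k+2\sqrt{n}+1$ suffices.

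For part~\textbf{(ii)} the engine is Theorem~\ref{co-di} with $s=t=k+1$. Here $\sigma_{k+1,k+1}=(k+1)\bigl(1/(k-1)!\bigr)^{1/(k+1)}\le k+1$, so once a subgraph is free of properly colored $K_{k+1,k+1}$ we obtain an orientation $D$ with $\delta^+(D)>\delta^c-(k+1)n^{1-1/(k+1)}-(k+1)$ whose directed cycles are properly colored cycles. If such a $D$ has $\delta^+(D)\ge g(k-p)$ for the appropriate deficiency $p$, then (assuming Conjecture~\ref{lich}) $D$ contains the required number of vertex-disjoint directed cycles of pairwise distinct lengths, which property~\textbf{(ii)} of Theorem~\ref{co-di} converts into properly colored cycles of distinct lengths. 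Thus everything reduces to arranging $K_{k+1,k+1}$-freeness at an affordable cost.

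To arrange it I would dichotomise on $p$, the maximum number of vertex-disjoint properly colored copies of $K_{k+1,k+1}$ in $G$. The key observation is that a properly colored $K_{k+1,k+1}$ is a properly colored subgraph, so \emph{every} cycle it contains is automatically properly colored; in particular it contains properly colored cycles of each even length $4,6,\dots,2(k+1)$. Hence if $p\ge k$, I read off one cycle of each such length from $k$ disjoint copies, obtaining $k$ vertex-disjoint properly colored cycles of distinct lengths with no recourse to Conjecture~\ref{lich}. If $p\le k-1$, I take a maximal disjoint family $\mathcal{F}$ with $|\mathcal{F}|=p$, delete its vertex set $U$ (so $G-U$ has no properly colored $K_{k+1,k+1}$), apply Theorem~\ref{co-di} and Conjecture~\ref{lich} to $G-U$ to extract $k-p$ properly colored cycles of distinct lengths via $g(k-p)$, and combine them with $p$ cycles drawn from $\mathcal{F}$, choosing the latter's lengths among $4,6,\dots,2(k+1)$ to avoid the $k-p$ lengths already produced (possible since $k$ even lengths are available).

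The main obstacle is the bookkeeping in the second regime: deleting $U$ costs each surviving vertex up to $|U|=2(k+1)p$ in color degree, and this loss must be absorbed by the surplus $(k+1)k-(k+1)=(k+1)(k-1)$ that the hypothesis carries beyond the bare $g(k-p)+(k+1)n^{1-1/(k+1)}+(k+1)$ demanded by Theorem~\ref{co-di} and Conjecture~\ref{lich}. Reconciling the two regimes so that the single constant $(k+1)k$ suffices for \emph{every} $p$ is the delicate point; I expect it to require extracting several distinct-length cycles from a single copy (packing cycles whose half-lengths sum to at most $k+1$) so as to lower the number of copies deleted, together with the monotonicity of $g$ to trade a larger deletion against a smaller Lichiardopol demand $g(k-p)$.
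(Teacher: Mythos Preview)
Your treatment of \textbf{(i)} is correct and is exactly the paper's argument.

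For \textbf{(ii)} your plan diverges from the paper's and, as you yourself flag, does not close. The obstacle you identify is genuine and is not repaired by the two fixes you sketch. Deleting $p$ disjoint properly colored copies of $K_{k+1,k+1}$ costs $2(k+1)p$ in color degree; for $p=k-1$ this is roughly $2k^2$, about twice the budget $(k+1)k$ carried by the hypothesis. Packing several distinct-length cycles into a single $K_{k+1,k+1}$ does not help much: to get $q$ vertex-disjoint even cycles of distinct lengths inside one copy you need $\sum_{j=2}^{q+1}j\le k+1$, so $q=O(\sqrt{k})$, and you still delete $2(k+1)$ vertices per copy. The monotonicity of $g$ is useless quantitatively, since Conjecture~\ref{lich} gives no relation between $g(k)$ and $g(k-p)$ beyond $g(k-p)\le g(k)$; you cannot trade deletion against a smaller Lichiardopol demand without an explicit bound you do not have.

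The paper sidesteps all of this with two changes. First, rather than extracting copies of $K_{k+1,k+1}$, it greedily builds a \emph{maximal} collection of vertex-disjoint sets $A_1,\dots,A_l$ with $G[A_i]$ containing a properly colored $C_{2i+2}$ and $|A_i|=2i+2$. If $l\ge k$ these cycles already witness the conclusion. Otherwise only $\sum_{i=1}^{l}(2i+2)=l^2+3l$ vertices are removed, and by maximality $G'':=G-\bigcup_i A_i$ has no properly colored $C_{2l+4}$, hence no properly colored $K_{l+2,l+2}$. Second, Theorem~\ref{co-di} is applied to $G''$ with the \emph{adaptive} parameter $s=t=l+2\le k+1$ (not the fixed $k+1$), and then Conjecture~\ref{lich} is invoked with the full $g(k)$ to obtain \emph{all} $k$ directed cycles of distinct lengths inside the resulting orientation $D$. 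The extracted $A_i$ are discarded; their sole purpose was to certify $K_{l+2,l+2}$-freeness. There is thus no combining step and no length-matching, and the deletion cost $l^2+3l\le (k-1)(k+2)$ is roughly half of the $2(k+1)(k-1)$ your scheme incurs in the worst case, which is what brings it within the stated hypothesis.
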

\begin{proof}
Let $G'$ be the resulting subgraph of $G$ by deleting all $C_4$'s in $C_4(G)$. So $G'$ contains no properly colored $C_4$'s. By Theorem \ref{co-di}, there exists a subgraph $H\subset G'$ and an orientation $D$ of $H$ such that $d^+_D(v)\geq f(k-|C_4(G)|)$ and any directed cycle in $D$ corresponds to a properly colored cycle in $G'$. Therefore, there are $k-|C_4(G)|$ vertex disjoint properly colored cycles in $G'$, which together with $C_4$'s in $C_4(G)$ form $k$ vertex disjoint properly colored cycles in $G$. This completes the proof of (i).

We proceed the proof of (ii) by finding a maximal collection $\mathcal{F}$ of vertex disjoint subsets $A_1,A_2,\ldots,A_{l}$ in $G$ such that each $G[A_i]$ contains a properly colored $C_{2i+2}$ for $i\in[l]$. We may assume that $|A_i|=2i+2$ for all $i\in[l]$ and assume $l<k$, otherwise $k$ vertex disjoint properly colored cycles of different lengths would be found. By the maximality of $\mathcal{F}$, we know that $G''=G-\bigcup\limits^l_{i=1}A_i$ contains no properly colored $K_{l+2,l+2}$ and $\delta^c(G'')\geq\delta^c(G)-\sum\limits^l_{i=1}|A_i|$. Applying Theorem \ref{co-di} with $s=t=l+2$, we obtain a subgraph $H\subset G''$ and an orientation $D$ of $H$ such that $\delta^+(D)\geq \delta^c(G'')-\left(\frac{1}{l!}\right)^{1/(l+2)}(l+2)n^{1-1/(l+2)}-l-1>g(k)$. By the definition of $g(k)$, there are $k$ vertex disjoint directed cycles of different lengths in $D$, which are actually $k$ vertex disjoint properly colored cycles of different lengths in $G$.
\end{proof}



\subsection{Properly Colored Complete Bipartite Graphs}
Recall that the signature of any oriented graph contains no properly colored $K_{s,t}$ for all integers $s\geq 2$ and $t\geq 3$. Let $T$ be a transitive tournament. Then there is no properly colored $K_{s,t}$ in the signature $(T,\tau)$ and $\sum_{v\in V(T,\tau)}d^c(v)= n(n+1)/2-1$. In this part, using Theorem \ref{co-di}, we give the following asymptotically tight total color degree condition forcing a properly colored $K_{s,t}$.
\begin{theorem}\label{kst}
  For all positive integers $n,s$ and $t$ with $n\geq t\geq s\geq2$, every edge-colored graph $G$ on $n$ vertices with $\sum_{v\in V(G)}d^c(v)>n^2/2+\sigma_{s,t}n^{2-1/s}+sn$ contains a properly colored $K_{s,t}$.
\end{theorem}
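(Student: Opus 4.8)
The plan is to argue by contraposition: I assume $G$ contains no properly colored $K_{s,t}$ and show that $\sum_{v\in V(G)}d^c(v)\leq n^2/2+\sigma_{s,t}n^{2-1/s}+sn$. The natural engine here is Theorem \ref{co-di}, which under exactly the hypothesis ``no properly colored $K_{s,t}$'' produces a spanning subgraph $H$ of $G$ with an orientation $D$ whose outdegrees control the color degrees of $G$ from below. Concretely, by Theorem \ref{co-di}(\textbf{i}) each vertex $v$ satisfies
\begin{equation}
d^c_G(v)< d^+_D(v)+\sigma_{s,t}n^{1-1/s}+s,\nonumber
\end{equation}
since $\left(\frac{t-1}{(s-1)!}\right)^{1/s}sn^{1-1/s}=\sigma_{s,t}n^{1-1/s}$ by the definition $\sigma_{s,t}=s\left(\frac{t-1}{(s-1)!}\right)^{1/s}$. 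Summing this inequality over all $n$ vertices reduces the problem to bounding $\sum_{v}d^+_D(v)$, which is simply $|E(D)|=|A(D)|$, the number of arcs of the oriented graph $D$.

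The key observation is that $D$ is an orientation of a subgraph of a \emph{simple} graph on $n$ vertices, so it has no digons and at most one arc between any pair of vertices. Hence $|A(D)|\leq\binom{n}{2}<n^2/2$. Combining this with the summed inequality gives
\begin{equation}
\sum_{v\in V(G)}d^c_G(v)< |A(D)|+\sigma_{s,t}n^{2-1/s}+sn\leq \binom{n}{2}+\sigma_{s,t}n^{2-1/s}+sn< \frac{n^2}{2}+\sigma_{s,t}n^{2-1/s}+sn,\nonumber
\end{equation}
which is exactly the negation of the theorem's hypothesis. Therefore any $G$ meeting the stated total-color-degree bound must contain a properly colored $K_{s,t}$, completing the contrapositive argument.

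I do not expect a serious obstacle, as the whole argument is a one-step application of Theorem \ref{co-di} followed by the trivial edge-count bound $|A(D)|\leq\binom{n}{2}$; the only point requiring minor care is tracking the strict versus non-strict inequalities so that the final bound comes out with the correct constants (note $\binom{n}{2}=\frac{n^2-n}{2}<\frac{n^2}{2}$, which even leaves a little slack). The mention of the transitive tournament signature $(T,\tau)$ in the paragraph preceding the statement signals that the bound $n^2/2$ is asymptotically best possible: that construction has no properly colored $K_{s,t}$ yet achieves $\sum_v d^c(v)=n(n+1)/2-1$, matching the leading term $n^2/2$ and confirming that the main term cannot be improved. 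I would include a sentence verifying this tightness, but it is not needed for the proof of the sufficiency direction itself.
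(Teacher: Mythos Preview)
Your proposal is correct and uses essentially the same idea as the paper: assume no properly colored $K_{s,t}$, invoke the orientation result to bound each $d^c_G(v)$ by $d^+_D(v)+\sigma_{s,t}n^{1-1/s}+s$, and sum, using that $\sum_v d^+_D(v)=|A(D)|\le\binom{n}{2}$. The only cosmetic difference is that the paper first states and proves a bipartite version (Theorem~\ref{kst2}) via Remark~\ref{r2}, and then deduces Theorem~\ref{kst} from it through the dual graph and Fact~\ref{f1}; you instead apply Theorem~\ref{co-di} directly to $G$, which is shorter but yields the same inequality with the same constants.
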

Indeed, Theorem \ref{kst} is easily derived from the following theorem  by Fact \ref{f1}.
\begin{theorem}\label{kst2}
   Let $G=(V_1, V_2; E)$ be an edge-colored bipartite graph with $|V_1|=n_1, |V_2|=n_2$. For all positive integers $ t\geq s\geq2$,  if $\sum_{v\in V(G)}d^c(v)>n_1n_2+\sigma_{s,t}(n_1n_2^{1-1/s}+n_2n_1^{1-1/s})+s(n_1+n_2)$, then $G$ contains a properly colored $K_{s,t}$.
\end{theorem}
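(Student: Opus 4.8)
The plan is to assume that $G$ contains no properly colored $K_{s,t}$ and show directly that $\sum_{v\in V(G)} d^c(v)$ cannot exceed the threshold in the statement; the theorem then follows by contraposition. Everything will be deduced from a single application of Lemma~\ref{monorain}, together with one elementary per-vertex inequality that encodes the trade-off between the colour-degree sums on the two sides.

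First I would apply Lemma~\ref{monorain} to $G=(V_1,V_2;E)$ with $V_1$ as the proper side, obtaining a spanning subgraph $H$ with $d^c_H(u)\ge d^c_G(u)-\sigma_{s,t}n_2^{1-1/s}$ for every $u\in V_1$ and $d^c_H(v)\le s-1$ for every $v\in V_2$. The construction in the proof of Lemma~\ref{monorain} keeps $H$ inside a $V_1$-proper subgraph, so $H$ is itself $V_1$-proper and hence $d^c_H(u)=d_H(u)$. Summing over $V_1$ gives $\sum_{u\in V_1} d^c_G(u)\le |E(H)|+\sigma_{s,t}n_1 n_2^{1-1/s}$, and of course $|E(H)|=\sum_{v\in V_2} d_H(v)$.

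The heart of the argument is the claim that for each $v\in V_2$,
\[
d_H(v)+d^c_G(v)\le n_1+(s-1).
\]
To prove it, write $m_G(v,c)$ for the number of edges of colour $c$ at $v$ in $G$, and let $K_v\subseteq C_G(v)$ be the (at most $s-1$) colours surviving at $v$ in $H$, so that $d_H(v)\le\sum_{c\in K_v}m_G(v,c)$. Splitting $C_G(v)=K_v\sqcup(C_G(v)\setminus K_v)$ and using $m_G(v,c)\ge 1$ on the second block, one gets $d_H(v)+d^c_G(v)\le\sum_{c\in C_G(v)}m_G(v,c)+|K_v|=d_G(v)+|K_v|\le n_1+(s-1)$, since $d_G(v)\le n_1$. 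Summing over $v\in V_2$ yields $\sum_{v\in V_2}\big(d_H(v)+d^c_G(v)\big)\le n_1 n_2+(s-1)n_2$, and therefore
\[
\sum_{v\in V(G)} d^c(v)=\sum_{u\in V_1} d^c_G(u)+\sum_{v\in V_2} d^c_G(v)\le n_1 n_2+\sigma_{s,t}n_1 n_2^{1-1/s}+(s-1)n_2,
\]
which is strictly smaller than the threshold in the statement (the omitted terms $\sigma_{s,t}n_2 n_1^{1-1/s}+sn_1+n_2$ are all positive). This contradicts the hypothesis, proving Theorem~\ref{kst2}; Theorem~\ref{kst} then follows via the dual graph and Fact~\ref{f1}.

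I expect the main obstacle to be isolating the correct per-vertex inequality. The naive estimates $|E(H)|\le n_1 n_2$ and $\sum_v d^c_G(v)\le n_1 n_2$ each lose a full factor, so handling the two sums independently only yields the useless bound $2n_1 n_2$; the whole point is that the same vertex $v$ cannot simultaneously have large $H$-degree and large $G$-colour-degree, because $H$ retains at most $s-1$ colour classes at $v$ while every remaining colour still contributes at least one edge to $d_G(v)$. A secondary point to check carefully is that the subgraph produced by Lemma~\ref{monorain} is genuinely $V_1$-proper; if one prefers not to invoke the internals of that proof, one may first pass to a $V_1$-proper sub-subgraph $H'$ with $d_{H'}(u)=d^c_H(u)$, after which all of the above inequalities go through unchanged (the surviving colours at each $v$ only shrink).
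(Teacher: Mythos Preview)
Your proof is correct and takes a genuinely different route from the paper's. The paper invokes Remark~\ref{r2} (the bipartite version of Theorem~\ref{co-di}) to obtain an orientation $D$ with $d^+_D(u)>d^c_G(u)-\sigma_{s,t}n_{3-i}^{1-1/s}-s$ on each side, and then contradicts $\sum_v d^+_D(v)=|E(D)|\le n_1n_2$; so the loss $\sigma_{s,t}n_{3-i}^{1-1/s}+s$ is paid symmetrically on both parts. You instead apply Lemma~\ref{monorain} once, directly to $G$, and couple the two sides through the per-vertex inequality $d_H(v)+d^c_G(v)\le n_1+(s-1)$, avoiding the orientation machinery altogether. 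This is more elementary and in fact yields the strictly sharper bound $\sum_v d^c(v)\le n_1n_2+\sigma_{s,t}n_1n_2^{1-1/s}+(s-1)n_2$, saving the symmetric term $\sigma_{s,t}n_2n_1^{1-1/s}$ (and a bit more). The paper's approach has the virtue of being a one-line corollary of a tool already developed for other purposes; your approach isolates a clean trade-off principle that stands on its own and would be worth stating as a remark. Your caveat about $H$ being $V_1$-proper is handled correctly: the proof of Lemma~\ref{monorain} explicitly works inside a $V_1$-proper subgraph $G_0$, and your fallback of passing to a $V_1$-proper sub-subgraph is also fine.
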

\begin{proof}
Suppose that $G$ contains no properly colored $K_{s,t}$. By Remark \ref{r2}, there exists a subgraph $H\subset G$ and an orientation $D$ of $H$ such that $d^+_D(u)> d^c_G(u)-\sigma_{s,t}n_{3-i}^{1-1/s}-s$ for each $u\in V_i$ ($i=1,2$). Therefore, $|E(D)|=\sum_{v\in V(G)}d^+_D(v)>n_1n_2$, which is a contradiction.
\end{proof}

Note that the signatures of a transitive tournament and a bipartite tournament with all arcs from one part to the other part imply the asymptotical sharpness of color degree conditions in Theorems \ref{kst} and \ref{kst2}. Indeed, if $t\geq3$ and $s\geq2$, then the signature of every tournament (or bipartite tournament) shows that the bound in Theorem \ref{kst} (or Theorem \ref{kst2}) is  asymptotically tight. Hence, it is interesting to know the exact estimate on the low order term $n^{2-1/s}$. Here we provide a lower bound as follows.

\begin{theorem}
For all integers $s,t$ with $st>2(s+t)$, there exists a constant $\gamma=\gamma(s,t)$ such that for every sufficiently large integer $n$, there exists an edge-colored complete graph $K^c_n$ with no properly colored $K_{s,t}$ and $\delta^c(K^c_n)>n/2+\gamma n^{1-\frac{s+t}{st-s-t}}$.
\end{theorem}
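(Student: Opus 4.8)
The plan is to give a probabilistic construction that perturbs the signature of a near-regular tournament. Recall from property (3) that the signature $(D,\tau)$ of any orientation $D$ contains no properly colored $K_{s,t}$, and that if $D$ is a regular tournament then, by property (1), every vertex already has color degree about $n/2$. I would start from such a signature as a \emph{baseline} and then recolor a sparse random set of edges with brand-new colors so as to push the color degrees above $n/2$, while arguing that essentially no properly colored $K_{s,t}$ is created.

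Concretely, fix a near-regular tournament $D$ on $[n]$ (so $d^+_D(v)\ge n/2-1$ for all $v$) together with its signature coloring, in which every arc $(u,v)$ is colored by its head $v$. Independently for each edge, with probability $\rho$ I recolor it with a fresh color used nowhere else (call such edges \emph{special}); I will take $\rho=n^{-(s+t)/(st-s-t)}$. The point of fresh colors is that a special edge can never clash with any other edge, so the only monochromatic clash that can occur at a vertex $v$ is between two non-special in-arcs, both colored $v$. Hence, exactly as in the proof of property (3), a properly colored $K_{s,t}$ is forced to have at most one non-special in-arc at each of its $s+t$ vertices, i.e.\ at most $s+t$ non-special edges in total, and therefore at least $st-(s+t)$ of its $st$ edges special. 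So it suffices to destroy every $K_{s,t}$ carrying at least $st-(s+t)$ special edges.

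The heart of the matter is a first-moment estimate. The number of potential bicliques is $O(n^{s+t})$, and for a fixed one the probability that at least $st-(s+t)$ of its edges are special is $O(\rho^{\,st-(s+t)})$ (the dominant term chooses exactly $st-(s+t)$ special edges). With the above choice of $\rho$ the exponents cancel, $n^{s+t}\rho^{\,st-(s+t)}=O(1)$, so the expected number of \emph{bad} bicliques is bounded by a constant $C=C(s,t)$. By Markov's inequality this number is $O(1)$ with probability bounded away from $0$; un-specializing (recoloring back to the baseline) every special edge lying in some bad biclique then removes only $O(1)$ edges and destroys all bad bicliques at once---since un-specialization only lowers special-edge counts, any surviving bad biclique would have been bad before and hence already emptied of its special edges---and, by the structural observation, leaves a coloring of $K_n$ with no properly colored $K_{s,t}$. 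This is where the choice of $\rho$, and thus the exponent $\tfrac{s+t}{st-s-t}$, is pinned down; note that $\rho\to0$ and the construction is meaningful precisely when $st-(s+t)>s+t$, i.e.\ $st>2(s+t)$.

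Finally, for the color degree: at each $v$ the out-arcs still receive $d^+_D(v)\ge n/2-1$ pairwise distinct colors, and every special in-arc contributes one further fresh color, so $d^c(v)\ge n/2-1+(\text{special in-degree of }v)$. The special in-degree of $v$ is a sum of about $n/2$ independent indicators of mean $\rho$, so a Chernoff bound gives that it is at least $\tfrac14\rho n=\tfrac14 n^{1-(s+t)/(st-s-t)}$ for all $v$ simultaneously with probability $1-o(1)$ (here $1-\tfrac{s+t}{st-s-t}>0$ makes $\rho n\gg\log n$). Intersecting this event with the $O(1)$-bad-bicliques event of positive probability, fixing such an outcome, and absorbing the $O(1)$ un-specializations yields $\delta^c(K^c_n)>n/2+\gamma n^{1-(s+t)/(st-s-t)}$ with, say, $\gamma=\tfrac18$. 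The main obstacle is the structural observation bounding the number of special edges in a properly colored $K_{s,t}$ together with the simultaneous two-sided control (few bad bicliques, yet uniformly large special in-degrees); the exponent itself then falls out of balancing the first moment.
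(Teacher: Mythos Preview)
Your argument is correct and follows essentially the same approach as the paper: start from the signature of a near-regular tournament, recolor a $p$-random set of edges with fresh colors where $p\asymp n^{-(s+t)/(st-s-t)}$, use the structural fact that the non-special edges of a properly colored $K_{s,t}$ form a properly colored subgraph of the signature (hence at most $s+t$ of them), and combine a first-moment bound on bad bicliques with a Chernoff bound on the special in-degree. The only cosmetic difference is that the paper takes the implicit constant in $p$ small enough so that with probability $>1/2$ there are \emph{zero} $(s+t)$-sets carrying $\ge st-s-t$ special edges, whereas you allow $O(1)$ bad bicliques and repair them by un-specializing their special edges; your repair argument is sound, since any properly colored $K_{s,t}$ surviving the cleanup would still need $\ge st-s-t$ special edges and hence would have been bad before, forcing all its special edges to have been reverted.
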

\begin{proof}
Let $T$ be a tournament with vertex set $\{v_1,v_2,\ldots,v_n\}$ and $\min\{\delta^+(T), \delta^-(T)\}\geq \lfloor\frac{n-1}{2}\rfloor$. Clearly, $(T,\tau)$ does not contain any copy of a properly colored $K_{s,t}$ and $\delta^c(T,\tau)\geq n/2$. Next we show that we can slightly improve the color degree of each $v\in (T,\tau)$ while no properly colored $K_{s,t}$ arises. In the following, we always choose $n$ to be sufficiently large whenever it is needed.

 Let $G=G(n, p)$ be the random graph on vertex set $\{v_1,v_2,\ldots,v_n\}$ with $p$ to be determined later.
 For any $S\in\binom{V(G)}{s+t}$, denote by $\mathcal{F}_S(s,t)$ the set consisting of all the distinct subgraphs of $G(n,p)$ on $S$ and with at least $st-s-t$ edges, then we have
 $$\mathbb{P}(|\mathcal{F}_S(s,t)|\geq1)\leq\binom{\binom{s+t}{2}}{st-s-t}p^{st-s-t}.$$
Thus, letting $X=\sum_{S\in\binom{V(G)}{s+t}}|\mathcal{F}_S(s,t)|$ and using the union bound, we obtain
$$\mathbb{P}(X\geq1)\leq\binom{n}{s+t}\binom{\binom{s+t}{2}}{st-s-t}p^{st-s-t}.$$
In paticular, there exists a constant $\gamma=\gamma(s,t)$ such that $\mathbb{P}(X\geq1)<1/2$ holds for $p=8\gamma n^{-\frac{s+t}{st-s-t}}$.

For all $i\in[n]$, denote by $A_i$ the set of in-neighbors of $v_i$ in $T$. Let $B$ be the event that $|N_{G}(v_j)\cap A_j|\leq\gamma n^{1-\frac{s+t}{st-s-t}}$ for some $j\in[n]$. Since $Y_i=|N_{G}(v_i)\cap A_i|$ has the binomial distribution $\mathbf{B}(|A_i|,p)$ for each $i\in[n]$, by Chernoff's bound \cite{JS}, we have
\begin{align}
\mathbb{P}(|N_{G}(v_i)\cap A_i|\leq\gamma n^{1-\frac{s+t}{st-s-t}})
<&\mathbb{P}(|N_{G}(v_i)\cap A_i|\leq\frac{3\gamma}{2} n^{1-\frac{s+t}{st-s-t}})\nonumber\\
<&\mathbb{P}(Y_i\leq\frac{\mathbb{E}[Y_i]}{2})\leq e^{-\frac{\mathbb{E}[Y_i]}{8}}.\nonumber
\end{align}
Thus,
$$\mathbb{P}(B)<\sum\limits_{i\in [n]}\mathbb{P}(Y_i\leq\frac{\mathbb{E}[Y_i]}{2})< ne^{-\frac{\gamma}{3}n^{1-\frac{s+t}{st-s-t}}}<\frac{1}{2}.$$

By the union bound, it follows that with positive probability, $B$ does not occur and $X=0$. So there exists a subgraph $G$ of $(T,\tau)$ such that $|N_{G}(v_i)\cap A_i|>\gamma n^{1-\frac{s+t}{st-s-t}}$ for all $i\in[n]$ and every subgraph of $G$ on $s+t$ vertices has less than $st-s-t$ edges.

Now we construct a new coloring of $(T,\tau)$ by recoloring each $uv\in E(G)$ differently with a new color not appearing in $\tau$ and denote the resulting edge-colored complete graph by $K^c_n$. So it holds that $\delta^c(K^c_n)>n/2+\gamma n^{1-\frac{s+t}{st-s-t}}$.
Now we show that $K^c_n$ contains no properly colored $K_{s,t}$. Recall that every properly colored subgraph in $(T,\tau)$ corresponds to an oriented graph in which each vertex has at most one in-neighbor. Hence, any properly colored subgraph of $(T,\tau)$ on $s+t$ vertices has at most $s+t$ edges. Since no subgraph of $G$ on $s+t$ vertices has at least $st-s-t$ edges, we know that no properly colored $K_{s,t}$ exists in $K^c_n$ when $st>s+t$.
\end{proof}

By the following proposition, one can extend Theorems \ref{kst} and \ref{kst2} to rainbow versions. 
\begin{proposition}\label{lem1}
For all positive integers $s,t$, every properly colored $K_{s, t+s(t-1)(s-1)}$ on a bipartition $(A,B)$ contains a rainbow $K_{s,t}$ on a bipartition $(A,B')$ for some $B'\subseteq B$.
\end{proposition}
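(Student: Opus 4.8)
The plan is to build $B'$ greedily, adding one vertex of $B$ at a time while maintaining that the complete bipartite subgraph between $A$ and the current set $B'$ is rainbow. Write $A=\{a_1,\dots,a_s\}$. The key structural fact is that, because the coloring of $K_{s,|B|}$ is proper, the edges of any fixed color form a matching; since one side is $A$ with $|A|=s$, each color appears on at most $s$ edges in total. Moreover, inside any candidate subgraph on $(A,B')$ the proper coloring already forces the $s$ edges at each $b\in B'$ to be distinctly colored and the edges at each $a_i$ to be distinctly colored, so such a subgraph fails to be rainbow only through a ``diagonal'' coincidence $c(a_ib)=c(a_jb')$ with $i\ne j$ and $b\ne b'$.

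Starting from $B'=\emptyset$, suppose that at some stage we have a rainbow $K_{s,k}$ on $(A,B')$ with $|B'|=k<t$; this uses exactly $sk$ distinct colors. I would call a vertex $b\in B\setminus B'$ \emph{blocked} if some edge $a_ib$ carries one of these $sk$ colors, since such a $b$ cannot be adjoined without destroying rainbowness, whereas any unblocked $b$ can (its $s$ edges are mutually distinct by properness and avoid all current colors). The central estimate is that each of the $sk$ colors blocks at most $s-1$ vertices of $B\setminus B'$: if $\gamma$ is such a color, its edges form a matching of size at most $s$, exactly one edge of which lies in the current rainbow subgraph, while every other $\gamma$-edge must meet $B\setminus B'$ (a $B'$-endpoint would repeat $\gamma$ inside the rainbow subgraph), and these endpoints are pairwise distinct by the matching property. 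Hence at most $sk(s-1)$ vertices of $B\setminus B'$ are blocked.

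Consequently an unblocked vertex exists, and the greedy step succeeds, as long as $|B|-k-sk(s-1)\ge 1$. Since this quantity decreases in $k$, the worst case is the final step $k=t-1$, where the inequality reads $|B|\ge t+s(t-1)(s-1)$, holding with equality by hypothesis; thus the construction runs until $|B'|=t$, yielding the desired rainbow $K_{s,t}$ on $(A,B')$. I expect the only delicate point to be the ``$s-1$'' in the blocking bound: one must use that a color already present in the rainbow subgraph contributes exactly one edge inside it, so it cannot block a full $s$ vertices outside. That the resulting arithmetic matches the stated size $t+s(t-1)(s-1)$ exactly is precisely what makes the bound tight and confirms that the greedy choice of parameters is the right one.
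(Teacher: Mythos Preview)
Your greedy argument is correct and essentially identical to the paper's proof, which is phrased as induction on $t$ but performs the same one-vertex extension from a rainbow $K_{s,t-1}$ to a rainbow $K_{s,t}$. The only cosmetic difference is how the blocking bound $s(s-1)(t-1)$ is obtained: you count per color (each of the $s(t-1)$ used colors blocks at most $s-1$ outside vertices), whereas the paper counts per vertex of $A$ (each $u\in A$ has at most $(s-1)(t-1)$ neighbours in $B\setminus B^*$ whose edge-color lies in $C(A,B^*)$).
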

\begin{proof}
We proceed the proof by induction on $t$. We may assume $s\geq2$ and $t\geq2$. Hence, every properly colored $K_{s, t+s(t-1)(s-1)}$ with a bipartition $(A,B)$ contains a rainbow $K_{s,t-1}$ with a bipartition $(A,B^*)$, where $|A|=s, |B^*|=t-1$. For each $u\in A$, define $F_u\subset B\backslash B^*$ by declaring $v\in F_u$ if only $c(uv)\in C(A,B^*)$. Since $|F_u|\leq(s-1)(t-1)$, we have $|B|>|\bigcup_{u\in A}F_u|+|B^*|$, i.e., there exists a vertex $u'\in B\backslash B^*$ such that $A\cup B^*\cup\{u'\}$ induces a rainbow $K_{s,t}$.
\end{proof}

\subsection{Rainbow $C_4$}
From Theorem \ref{CH-bi} and the correctness of Conjecture \ref{seymour1} when $r=2$ \cite{SS}, one can immediately obtain that every edge-colored bipartite graph $G$ with $n$ vertices in each part and $\delta^c(G)>n/3+2\sqrt{n}+1$ contains a properly colored $C_4$. It is easy to see that the minimum color degree condition is asymptotically tight by considering the signature of the $n/3$-blow-up of a directed $C_6$.

Compared to properly colored cycles, finding rainbow cycles seems much more difficult (see \cite{FLZ, lo}). For rainbow $C_4$, the first result comes from Li \cite{LI}, which asserts that every bipartite graph $G$ with $n$ vertices in each part and $\delta^c(G)>3n/5+1$ contains a rainbow $C_4$. By Proposition \ref{lem1} and Theorem \ref{kst2}, the minimum color degree condition can be easily improved to $\delta^c(G)>n/2+2\sqrt{3n}+2$. In this part, we resolve this problem asymptotically by proving the following stronger result using Lemma \ref{monorain}.

\begin{theorem}\label{main thm}
  Let $G=(V_1, V_2; E)$ be an edge-colored bipartite graph with $|V_i|=n_i$, $\delta_i^c=\min_{v\in V_i}d^c(v)$ for $i\in \{1,2\}$. If $\delta_1^c\delta_2^c>n_1n_2/9+8n_1\sqrt{n_2}+8n_2\sqrt{n_1}$, then $G$ contains a rainbow $C_4$. Therefore, if $\delta_1^c>n_2/3+24\sqrt{n_2}$ and $\delta_2^c>n_1/3+24\sqrt{n_1}$, then $G$ contains a rainbow $C_4$. Moreover, this is asymptotically best possible by considering the signature of the $n/3$-blow-up of a directed $C_6$.
\end{theorem}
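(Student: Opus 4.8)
The plan is to trade the rainbow $C_4$ for a properly colored $K_{2,4}$ and then invoke Lemma~\ref{monorain}. I would first record the elementary reduction: \emph{every properly colored $K_{2,4}$ contains a rainbow $C_4$}. Writing its parts as $\{x_1,x_2\}$ and $\{y_1,y_2,y_3,y_4\}$, properness makes $c(x_1y_1),\dots,c(x_1y_4)$ pairwise distinct, $c(x_2y_1),\dots,c(x_2y_4)$ pairwise distinct, and $c(x_1y_i)\neq c(x_2y_i)$ for every $i$. Attaching to $y_i$ the two-element set $S_i=\{c(x_1y_i),c(x_2y_i)\}$, the quadruple $x_1,x_2,y_i,y_j$ spans a rainbow $C_4$ exactly when $S_i\cap S_j=\emptyset$. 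If no rainbow $C_4$ existed, the $S_i$ would pairwise intersect and hence form a star or a triangle of colours; a star centred at a colour $a$ admits at most one $i$ with $c(x_1y_i)=a$ and at most one with $c(x_2y_i)=a$, so at most two indices, while a triangle lives on three colours and the distinct values $c(x_1y_i)$ permit at most three indices --- either way fewer than four, a contradiction. Thus I may assume $G$ has no properly colored $K_{2,4}$, with either side playing the role of the ``$2$''.

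This unlocks Lemma~\ref{monorain} with $s=2,\ t=4$ (so $\sigma_{2,4}=2\sqrt3$), which I would apply to both orientations of the bipartition. It produces a $V_1$-proper, pseudo $V_2$-canonical spanning subgraph $H_1$ with $d_{H_1}(u)=d^c_{H_1}(u)\ge d^c_G(u)-2\sqrt{3n_2}$ for all $u\in V_1$, and symmetrically a subgraph $H_2$ with $d_{H_2}(v)\ge d^c_G(v)-2\sqrt{3n_1}$ for all $v\in V_2$. These two applications are exactly what converts the colour-degree hypotheses into honest vertex-degree information on each side, paying only the errors $2\sqrt{3n_2}$ and $2\sqrt{3n_1}$ that reappear (with room to spare) inside the claimed $8n_1\sqrt{n_2}+8n_2\sqrt{n_1}$. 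A clean structural dividend I would exploit is that inside a colour class $W_c$ of $H_1$ the neighbourhoods are pairwise disjoint (a common neighbour would carry two edges of colour $c$, impossible in a $V_1$-proper graph), so $\sum_{v\in W_c}d_{H_1}(v)\le n_1$, and dually for $H_2$.

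The engine is then a double count of cherries. The number of length-two paths of $H_1$ centred at $V_1$ is $\sum_{u\in V_1}\binom{d_{H_1}(u)}{2}\ge n_1\binom{\delta_1^c-2\sqrt{3n_2}}{2}$, and regrouping by the unordered endpoint pair $\{v_1,v_2\}\subseteq V_2$ turns this into a sum of common-neighbourhood sizes. Here the no-rainbow-$C_4$ hypothesis, through the star/triangle dichotomy of the sets $\{c(uv_1),c(uv_2)\}$, forces the common neighbours of a fixed pair to be governed by one colour (the extremal star, which in the $n/3$-blow-up of a directed $C_6$ already produces $\approx n/3$ common neighbours) or by three colours; in either case their number is controlled by the monochromatic degrees, which the disjointness bound $\sum_{v\in W_c}d_{H_1}(v)\le n_1$ caps against the budget $n_1n_2$. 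Running the symmetric count on $H_2$ and combining the two estimates by Cauchy--Schwarz should yield $\delta_1^c\delta_2^c\le n_1n_2/9+8n_1\sqrt{n_2}+8n_2\sqrt{n_1}$, contradicting the hypothesis and so producing the desired rainbow $C_4$.

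The main obstacle I anticipate is precisely this last combination: squeezing out the constant $1/9$ and fusing the two one-sided canonical pictures from $H_1$ and $H_2$ into a single \emph{product} inequality rather than two separate one-sided bounds. The delicate point is that the star case allows $\Theta(n)$ common neighbours, so the count cannot be closed pair by pair; it must be balanced globally against $\delta_2^c$ through the monochromatic-degree budget, and the Cauchy--Schwarz step has to be arranged so that the factor $1/3$ from each side multiplies to $1/9$ while the lower-order terms stay within $8n_1\sqrt{n_2}+8n_2\sqrt{n_1}$. Once the product bound is in place the two-sided corollary is immediate, since $(n_2/3+24\sqrt{n_2})(n_1/3+24\sqrt{n_1})$ exceeds the threshold, and the asymptotic sharpness is exhibited by the signature of the $n/3$-blow-up of a directed $C_6$.
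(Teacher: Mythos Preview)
Your reduction to ``no properly colored $K_{2,4}$'' and the two applications of Lemma~\ref{monorain} are fine (the first is Proposition~\ref{lem1} in the paper). The gap is in the cherry count. In your $H_1$ every $v\in V_2$ has a single canonical colour $c_v$, so for a fixed pair $v_1,v_2$ with $c_{v_1}\neq c_{v_2}$ \emph{every} common neighbour $u$ has the \emph{same} colour-pair $S_u=\{c_{v_1},c_{v_2}\}$; your star/triangle dichotomy is vacuous here, and the $4$-cycles $u v_1 u' v_2$ inside $H_1$ are bichromatic, so the no-rainbow-$C_4$ hypothesis gives no information. Hence $|N_{H_1}(v_1)\cap N_{H_1}(v_2)|$ is genuinely unbounded, and your ``global balancing against $\delta_2^c$'' together with the disjointness bound $\sum_{v\in W_c}d_{H_1}(v)\le n_1$ does not obviously deliver the constant $1/9$: those constraints only control totals of degrees, not of pairwise minima or intersections, and I do not see a Cauchy--Schwarz step that recovers a \emph{product} inequality $\delta_1^c\delta_2^c\le n_1n_2/9+\cdots$ from the two one-sided cherry counts.

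The paper's argument is structurally different and does not count cherries at all. It fixes a vertex $v_0\in V_1$ achieving $\Delta_1=\Delta^{mon}(v_0)$, partitions $V_2$ into a monochromatic star $V_2^m$ of size $\Delta_1$, a proper star $V_2^c$ of size $\delta_1^c-1$, and the rest $V_2'$, and then applies Lemma~\ref{monorain} \emph{twice in series} to build a pseudo $V_2'$-canonical subgraph between $V_2'$ and an auxiliary set $U\subseteq V_1$ with at least $\delta_1^c\delta_2^c-4n_1\sqrt{n_2}-4n_2\sqrt{n_1}$ edges. This forces $|V_2'|\Delta_2\ge \delta_1^c\delta_2^c-\text{error}$ and hence
\[
n_2\ \ge\ \frac{\delta_1^c\delta_2^c}{\Delta_2}+\Delta_1+\delta_1^c-\text{error},
\]
with the symmetric inequality for $n_1$. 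Multiplying and using $b+a^2/b\ge 2a$ three times turns the product into $9\,\delta_1^c\delta_2^c-72(n_1\sqrt{n_2}+n_2\sqrt{n_1})$, which is where the $1/9$ comes from. The maximum monochromatic degrees $\Delta_1,\Delta_2$ are essential pivots in this computation; your global scheme never isolates them, and that is the missing idea.
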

\begin{proof}

Let $G, n_1,n_2,\delta_1^c,\delta_2^c$ be given as in Theorem \ref{main thm}, and suppose $G$ contains no rainbow $C_4$'s. We may assume that $G$ is edge-critical, that is, every edge deletion would lead to a decrease in $d^c(v)$ for some vertex $v\in V(G)$. Therefore, every monochromatic subgraph of $G$ is a union of vertex-disjoint stars. 
Let $\Delta_1=\max_{u\in V_1}\Delta^{mon}(u)$ and $\Delta_2=\max_{u\in V_2}\Delta^{mon}(u)$. Choose a vertex $v_0\in V_1$ with $\Delta^{mon}(v_0)=\Delta_1$.
Let $V_2^m, V_2^c, V'_2$ be a partition of $V_2$ such that
\begin{description}
  \item[$\bullet$] $V_2^m\subseteq N(v_0),~ |V_2^{m}|=\Delta_{1}$ and $G[v_0,V_2^{m}]$ is a monochromatic star;
  \item[$\bullet$] $V_2^c\subseteq N(v_0)\setminus V_2^m,~ |V_2^{c}|=\delta_{1}^c-1$ and $G[v_0,V_2^{c}]$ is a properly colored star;
  \item[$\bullet$] $V'_2=V_2\setminus(V_2^{c}\cup V_2^{m})$.
\end{description}
 Let $c_1$ be the color on the edges in $G[v_0,V_2^{m}]$.  For every vertex $u\in V_2^{c}$, let $N^c(u)$ be a maximal subset of $N(u)$ such that $G[u,N^c(u)]$ is a properly colored star with no colors $c_1$ and $c(uv_0)$. Note that $|N^c(u)|\geq \delta_2^c-2$ for every vertex $u\in V_2^{c}$. Let $U=\bigcup_{u\in V_2^{c}}N^c(u)$.

\begin{claim}
For every $v\in U$, $|C(v,V_2^{m})|\leq2$.
\end{claim}
\begin{proof}
Suppose that $v\in N^c(w)$ for some vertex $w\in V_2^{c}$, it is easy to see that $c(wv),c(v_0w)$ and $c_1$ are pairwise distinct. The case when $|V_2^{m}|\leq2$ is trivial, so it suffices to consider the case $|V_2^{m}|>2$. Since $G$ is edge-critical, we have that $c_1\notin C(v,V_2^m)$. Therefore, if $|C(v,V_2^{m})|>2$, then we can find a color $c'\in C(v,V_2^{m})$ and a corresponding vertex $v'\in V_2^m$ such that $c(vv')=c'$ and $c(wv),c(vv'),c(v'v_0),c(v_0w)$ are pairwise distinct. So we find a rainbow $C_4$, which is a contradiction.
\end{proof}
Let $G'\subseteq G$ be a maximum pseudo $V'_2$-canonical subgraph between $U$ and $V'_2$. Next we give a lower bound on the number of edges in $G'$.

\begin{claim}
$|E(G')|\geq\delta_1^c\delta_2^c-4n_1\sqrt{n_2}-4n_2\sqrt{n_1}$.
\end{claim}
\begin{proof}
Let $G_1\subseteq G$ be a maximal $V^c_2$-proper subgraph between $V^c_2$ and $U$. Clearly, $|E(G_1)|\geq(\delta_1^c-1)(\delta_2^c-2)$. By Proposition \ref{lem1}, $G$ also contains no properly colored $K_{2,4}$. Applying Lemma \ref{monorain} to $G_1$ with $s=2$ and $t=4$, there is a pseudo $U$-canonical subgraph $H_1\subseteq G_1$ of size at least $|E(G_1)|-4|V_2^c|\sqrt{|U|}$. Since $|C(v,V_2^{m})|\leq2$ for each vertex $v\in U$, there is a $U$-proper subgraph $G_2$ of $G$ between $U$ and $V'_2$ such that
$$|E(G_2)|\geq\delta_1^c|U|-2|U|-(|U||V_2^c|-|E(H_1)|+|U|).$$
Applying Lemma \ref{monorain} to $G_2$ with $s=2$ and $t=4$, we obtain a pseudo $V'_2$-canonical subgraph $H_2$ such that $|E(H_2)|\geq |E(G_2)|-4|U|\sqrt{|V'_2|}$. Hence,
 \begin{align}
|E(G')|&\geq |E(H_2)|\geq |E(G_2)|-4|U|\sqrt{|V'_2|}\nonumber\\
&\geq\delta_1^c|U|-2|U|-(|U||V_2^c|-|E(H_1)|+|U|)-4|U|\sqrt{|V'_2|} \nonumber\\
&\geq|E(H_1)|-2|U|-4|U|\sqrt{|V'_2|} \nonumber\\
&\geq \delta_1^c\delta_2^c-4n_1\sqrt{n_2}-4n_2\sqrt{n_1}. \nonumber
\end{align}
\end{proof}
Since $G'$ is pseudo $V'_2$-canonical, we have $|V'_2|\Delta_{2}\geq |E(G')|\geq \delta_1^c\delta_2^c-4n_1\sqrt{n_2}-4n_2\sqrt{n_1}$, which implies $\Delta_2\geq n_1/9$. Hence,
\begin{align}
n_2&=|V'_2|+|V^c_2|+|V^{m}_2|\nonumber\\
&\geq\frac{\delta_1^c\delta_2^c-4n_1\sqrt{n_2}-4n_2\sqrt{n_1}}{\Delta_{2}}+\Delta_{1}+\delta_1^c-1\nonumber\\
&\geq\frac{\delta_1^c\delta_2^c}{\Delta_{2}}+\Delta_{1}+\delta_1^c-36\sqrt{n_2}-\frac{36n_2}{\sqrt{n_1}}.\nonumber
\end{align}

\noindent By symmetry, we also have $n_1\geq\frac{\delta_1^c\delta_2^c}{\Delta_{1}}+\Delta_{2}+\delta_2^c-36\sqrt{n_1}-\frac{36n_1}{\sqrt{n_2}}$.
Therefore,
\begin{align}
n_1n_2\geq3\delta_1^c\delta_2^c+f(\delta_1^c\delta_2^c,\Delta_1\Delta_2)+\delta_2^cf(\delta_1^c,\Delta_1)+\delta_1^cf(\delta_2^c,\Delta_2)-72(n_1\sqrt{n_2}+n_2\sqrt{n_1}),\nonumber
\end{align}
where $f(a,b)=b+a^2/b$. Since $f(a,b)\geq2a$ for any positive numbers $a,b$, it follows that $$n_1n_2\geq9\delta_1^c\delta_2^c-72(n_1\sqrt{n_2}+n_2\sqrt{n_1})>n_1n_2,$$
which is a contradiction. This completes the proof of Theorem \ref{main thm}.
\end{proof}

By Fact \ref{f1}, the following result can be easily derived from Theorem \ref{main thm}.
\begin{corollary}\label{cor2}
Let $G$ be an edge-colored graph on $n$ vertices. Then the following hold.
\begin{description}
\item [(i)] If $\delta^c(G)>n/3+2\sqrt{n}+1$, then $G$ contains a properly colored $C_4$.
\item [(ii)] If $\delta^c(G)>n/3+24\sqrt{n}$, then $G$ contains a rainbow $C_4$.
\end{description}
\end{corollary}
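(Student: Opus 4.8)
The plan is to reduce both statements to the bipartite setting via the dual graph and then invoke the corresponding bipartite theorems through Fact \ref{f1}. First I would pass from $G$ to its dual graph $\widehat{G}=(V_1,V_2;\widehat{E})$. Since $V_1=\{v^{(1)}\mid v\in V(G)\}$ and $V_2=\{v^{(2)}\mid v\in V(G)\}$, both parts of $\widehat{G}$ have exactly $n$ vertices; and because $d^c(v)=d^c(v^{(1)})=d^c(v^{(2)})$ for every $v\in V(G)$, we get $\delta^c(\widehat{G})=\delta^c(G)$. By Fact \ref{f1}, $\widehat{G}$ contains a properly colored (respectively rainbow) $C_4=K_{2,2}$ if and only if $G$ does, so it suffices to force such a $C_4$ inside the balanced bipartite graph $\widehat{G}$.

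For part (i), I would apply the bipartite consequence recorded at the start of this subsection, namely that any edge-colored bipartite graph with $n$ vertices in each part and minimum color degree exceeding $n/3+2\sqrt{n}+1$ contains a properly colored $C_4$ (this follows from Theorem \ref{CH-bi} with $r=2$ together with the verified case $r=2$ of Conjecture \ref{seymour1}). Taking this graph to be $\widehat{G}$, the hypothesis $\delta^c(G)>n/3+2\sqrt{n}+1$ produces a properly colored $C_4$ in $\widehat{G}$, hence a properly colored $C_4$ in $G$ by Fact \ref{f1}.

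For part (ii), I would instead apply the ``therefore'' form of Theorem \ref{main thm} to $\widehat{G}$. Here $n_1=n_2=n$ and $\delta_1^c=\delta_2^c=\delta^c(G)$, so the two hypotheses $\delta_1^c>n_2/3+24\sqrt{n_2}$ and $\delta_2^c>n_1/3+24\sqrt{n_1}$ collapse to the single assumption $\delta^c(G)>n/3+24\sqrt{n}$. Theorem \ref{main thm} then yields a rainbow $C_4$ in $\widehat{G}$, which transfers back to $G$ by Fact \ref{f1}.

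This is a routine reduction with essentially no obstacle; the only points demanding a little care are the bookkeeping observations that the dual graph is balanced with $\delta^c(\widehat{G})=\delta^c(G)$, and that the cycle $C_4$ coincides with the complete bipartite graph $K_{2,2}$, so that Fact \ref{f1} applies verbatim to both the properly colored and the rainbow cases.
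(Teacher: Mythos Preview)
Your proposal is correct and follows exactly the approach the paper intends: pass to the dual graph $\widehat{G}$, observe it is balanced with $\delta^c(\widehat{G})=\delta^c(G)$, apply the appropriate bipartite result, and transfer back via Fact~\ref{f1} using $C_4=K_{2,2}$. You are in fact slightly more precise than the paper's one-line justification, since you correctly separate the sources---part~(i) uses the bipartite properly colored $C_4$ statement from the opening paragraph of the subsection (Theorem~\ref{CH-bi} plus the $r=2$ case of Conjecture~\ref{seymour1}), while part~(ii) uses Theorem~\ref{main thm}---whereas the paper's text only cites Theorem~\ref{main thm}.
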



If the host graph $G$ is an edge-colored triangle-free graph on $n$ vertices, then $\delta^c(G)>n/3+1$ forces a rainbow $C_4$ by a result of \v{C}ada et al. \cite{CA}. Here, we also asymptotically resolve this problem using Lemma \ref{monorain}.
\begin{theorem}\label{tri}
  Let $G$ be an edge-colored triangle-free graph on $n$ vertices. If $\delta^c(G)>n/5+3\sqrt{n}$, then $G$ contains a rainbow $C_4$, and this is asymptotically best possible by considering the signature of the $n/5$-blow-up of a directed $C_5$.
\end{theorem}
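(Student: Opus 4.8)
The plan is to mirror the strategy of Theorem \ref{main thm}, exploiting that triangle-freeness makes the neighborhood of any vertex independent, so that the local structure around a fixed vertex behaves bipartitely. First I would pass to an edge-critical subgraph by repeatedly deleting any edge whose removal does not decrease the color degree of any vertex; this preserves $\delta^c(G)>n/5+3\sqrt n$, triangle-freeness, and the absence of a rainbow $C_4$, and it forces every color class to be a union of vertex-disjoint stars. Assuming for contradiction that $G$ has no rainbow $C_4$, Proposition \ref{lem1} with $s=t=2$ (so that $t+s(t-1)(s-1)=4$) shows that $G$ contains no properly colored $K_{2,4}$, which is exactly the hypothesis needed to invoke Lemma \ref{monorain} with $s=2,\ t=4$ (here $\sigma_{2,4}=2\sqrt3$) on bipartite subgraphs, each application producing a pseudo-canonical subgraph at a cost of $O(\sqrt n)$ per vertex in color degree.

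Next I would set up the layers. Write $\delta=\delta^c(G)$ and $\Delta=\Delta^{mon}(G)$, and fix $v_0$ attaining $\Delta$ in a color $c_1$. Let $V^m=N_{c_1}(v_0)$ be its monochromatic star, so $|V^m|=\Delta$, and let $V^c\subseteq N(v_0)\setminus V^m$ be a properly colored star of size $\delta-1$ (one edge per color other than $c_1$). Because $G$ is triangle-free, $N(v_0)$ is independent, so for each $u\in V^c$ a maximal properly colored star $N^c(u)$ at $u$ avoiding the two colors $c_1$ and $c(uv_0)$ lands entirely in $M:=V\setminus N[v_0]$ and has size at least $\delta-2$; put $U=\bigcup_{u\in V^c}N^c(u)\subseteq M$. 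The crucial local claim is that every $v\in U$ satisfies $|C(v,V^m)|\le 2$: since $c_1\notin C(v,V^m)$ (leaves of the maximal monochromatic star at $v_0$ have monochromatic degree one), a third color from $v$ to $V^m$ would close a rainbow $C_4$ on $v_0\,w\,v\,v'$ with $w\in V^c$, $v\in N^c(w)$, $v'\in V^m$.

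Now I would run the counting, following Theorem \ref{main thm}. Applying Lemma \ref{monorain} to the $V^c$-proper graph $G[V^c,U]$ (which has at least $(\delta-1)(\delta-2)$ edges) produces a pseudo-$U$-canonical subgraph $H_1$ losing only $O(|V^c|\sqrt n)$ edges; since each vertex of $U$ is then monochromatic toward $V^c$ and so carries at most $\Delta^{mon}\le\Delta$ of these edges, this yields $|U|\ge \delta^2/\Delta-O(\sqrt n)$. For the forward step, I discard the at most two colors from each $v\in U$ toward $V^m$ (the claim) and account globally, through $|E(H_1)|$, for the colors running back into $V^c$, exactly as in the chain of inequalities of Theorem \ref{main thm}; triangle-freeness guarantees that the remaining, newly colored edges leave $N[v_0]$ and reach fresh vertices of $M$. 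A second application of Lemma \ref{monorain} converts these into pseudo-canonical form and, via the same ``at most $\Delta$ edges per canonical vertex'' bound, produces additional disjoint regions of $M$ whose sizes sum to at least $2\delta-O(\sqrt n)$. Collecting $n=1+|V^m|+|V^c|+|M|$ with $|M|\ge |U|+2\delta-O(\sqrt n)$ gives
\begin{equation*}
n\ \ge\ \Delta+\frac{\delta^2}{\Delta}+3\delta-O(\sqrt n),
\end{equation*}
and since $\Delta+\delta^2/\Delta\ge 2\delta$ by AM--GM, this forces $\delta\le n/5+O(\sqrt n)$, contradicting $\delta>n/5+3\sqrt n$ once the accumulated constants are absorbed into the coefficient $3$.

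The main obstacle is precisely this final counting. In the bipartite Theorem \ref{main thm} two symmetric three-term inequalities multiply to yield the factor $9$; here, with no bipartition to exploit, I must instead locate inside the single second neighborhood $M$ three essentially disjoint regions — one of size $\ge\delta^2/\Delta$ and two of size $\ge\delta$ — so that AM--GM delivers the sharp factor $5$ matching the $n/5$-blow-up of $\vec C_5$. Ensuring genuine disjointness of these layers, which is exactly where triangle-freeness is indispensable (it forces each properly colored star to advance into previously unused neighborhoods rather than fold back onto $N[v_0]$), while keeping every error term at $O(\sqrt n)$ across the two applications of Lemma \ref{monorain}, is the delicate point of the argument.
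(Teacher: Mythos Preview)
Your overall architecture diverges from the paper's, and the divergence is precisely where the gap lies. You anchor the argument at a single vertex $v_0$ realizing $\Delta=\Delta^{mon}(G)$, producing two immediate layers $V^m,V^c$ and then trying to carve out three further pairwise disjoint pieces of $M$ (one of size $\delta^2/\Delta$, two of size $\delta$) so that AM--GM yields $5\delta$. The paper instead first proves, via the dual graph $\widehat G$, Lemma~\ref{monorain}, and Cauchy--Schwarz, that there is an \emph{edge} $uv$ with $\Delta^{mon}(u)+\Delta^{mon}(v)\ge 2\delta^c-8\sqrt n$. From this edge it obtains four layers at once: monochromatic stars $A_1\subset N(u)$, $A_2\subset N(v)$ with $|A_1|+|A_2|\ge 2\delta^c-8\sqrt n$, and properly colored stars $B_1\subset N(u)$, $B_2\subset N(v)$ of size $\ge\delta^c-3$. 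Triangle-freeness makes $A_1,A_2,B_1,B_2$ pairwise disjoint automatically, because each pair lies in the neighbourhood of one of $u,v$. A single further step (choosing a well-placed edge $u'v'$ inside a pseudo-canonical subgraph of $G[B_1,B_2']$ and invoking triangle-freeness around $u'$ and $v'$) supplies the fifth layer $S\cup T$, and one reads off $n\ge |A_1|+|A_2|+|B_1|+|B_2|+|S|+|T|\ge 5\delta^c-15\sqrt n$ directly, with no AM--GM needed.

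The step in your outline that does not go through is the production of the two extra $\delta$-sized regions disjoint from $U$. Your justification, ``triangle-freeness guarantees that the remaining, newly colored edges leave $N[v_0]$ and reach fresh vertices of $M$,'' is only half right: it forces those edges out of $N(v_0)$, but it does \emph{not} force them out of $U$. The set $U=\bigcup_{w\in V^c}N^c(w)$ is a union of second neighbourhoods of $v_0$, and nothing prevents a vertex $v\in U\cap N^c(w)$ from sending most of its remaining colors back into $N^c(w')$ for other $w'\in V^c$; triangle-freeness only rules out edges within a single $N(w)$. So the three pieces you need inside $M$ can overlap substantially, and the inequality $|M|\ge |U|+2\delta-O(\sqrt n)$ is unsupported. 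You flag this yourself as ``the delicate point,'' but it is not a detail to be cleaned up; it is the missing idea. The paper's device of anchoring at an edge with large combined monochromatic degree is exactly what sidesteps this, since disjointness of the first four layers then follows from triangle-freeness around just two vertices.
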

\begin{proof}
Let $G$ be an edge-colored triangle-free graph with no rainbow $C_4$ and $\delta^c(G)=\delta^c>n/5+3\sqrt{n}$. Then by triangle-freeness, we have that $n/5+3\sqrt{n}<n/2$, and thus $n>100$. We also assume that $G$ is edge-critical, and proceed in our proof with the following claim.
\begin{claim}\label{cs}
There exists an edge $uv$ in $G$ such that $\Delta^{mon}(u)+\Delta^{mon}(v)\geq2\delta^c-8\sqrt{n}$.
\end{claim}
\begin{proof}
Let $G'$ be a maximum $V_1$-proper subgraph of the dual graph $\widehat{G}=(V_1,V_2;E')$ of $G$. By Proposition \ref{lem1}, $G'$ contains no properly colored $K_{2,4}$. Applying Lemma \ref{monorain} to $G'$ with $s=2$ and $t=4$, we obtain a pseudo $V_2$-canonical graph $H\subseteq\widehat{G}$ such that 
for each vertex $v^{(1)}\in V_1$, $d_{H}^c(v^{(1)})\geq \delta^c-4\sqrt{n}$. Therefore,
\begin{align}
\sum_{v^{(1)}u^{(2)}\in E(H)}\left(d_H(v^{(2)})+d_{H}(u^{(2)})\right)=&\sum_{v^{(1)}\in V_1}\left(d_H(v^{(1)})+d_H(v^{(2)})\right)d_H(v^{(2)})\nonumber\\
\geq&(\delta^c-4\sqrt{n})|E(H)|+|E(H)|^2/n\nonumber\\
\geq&2(\delta^c-4\sqrt{n})|E(H)|,\nonumber
\end{align}
where the second inequality is derived from the Cauchy-Schwartz inequality that
\begin{align}
\sum_{v^{(1)}\in V_1}\left(d_H(v^{(2)})\right)^2\geq\left(\sum_{v^{(1)}\in V_1}d_H(v^{(2)})\right)^2/n=|E(H)|^2/n.\nonumber
\end{align}
By the pigeonhole principle, there exists an edge $uv\in E(G)$ such that $\Delta^{mon}(u)+\Delta^{mon}(v)\geq2\delta^c-8\sqrt{n}$.
\end{proof}

Let $uv\in E(G)$ be an edge with $\Delta^{mon}(u)+\Delta^{mon}(v)\geq2\delta^c-8\sqrt{n}$, and choose $A_1\subseteq N(u)$ and $A_2\subseteq N(v)$ such that $G[u,A_1]$ and $G[v,A_2]$ are monochromatic stars and $|A_1|+|A_2|\geq2\delta^c-8\sqrt{n}$. Let $c_1$ and $c_2$ be the colors on the edges in $G[u,A_1]$ and $G[v,A_2]$ respectively.  Since $G$ is edge-critical, we may assume that $c(uv)\neq c_2$. Let $B_1$ be a maximal subset of $N(u)\setminus A_1$ such that $G[u,B_1]$ is a properly colored star with no colors $c(uv)$ and $c_2$. Let $B_2$ be a subset of $N(v)\setminus A_2$ such that $G[v,B_2]$ is a properly colored star of size $\delta^c-3$. Then by triangle-freeness, $A_1,A_2,B_1,B_2$ are pairwise disjoint and $|B_1|\geq|B_2|$.
\begin{claim}\label{1}
For every vertex $w\in B_1$, $|C(w,A_2)|\leq2$.
\end{claim}
\begin{proof}
Suppose there exists a vertex $w\in B_1$ such that there are at least three colors appearing between $w$ and $A_2$, then we can greedily find a vertex $r\in A_2$ such that $c(wr)\notin \{c_2, c(uv), c(uw)\}$. Hence, $uvrwu$ is a rainbow $C_4$, which is a contradiction.
\end{proof}

For each vertex $w\in B_1$, let $S_{w}$ be the maximal subset of $N(w)\cap B_2$ such that $G[w,S_w]$ is a properly colored star with no colors $c_1$ and $c(uw)$.

\begin{claim}
For every $w\in B_1$, we have $|S_w|>4\sqrt{n}$.
\end{claim}
\begin{proof}
If some $w\in B_1$ satisfies that $|S_w|\leq4\sqrt{n}$, then by Claim \ref{1} and triangle-freeness, $w$ has at least $\delta^c-4\sqrt{n}-4$ neighbors outside $A_1\cup A_2\cup B_1\cup B_2$. Hence,
$n\geq|A_1|+|A_2|+|B_1|+|B_2|+\delta^c-4\sqrt{n}-4\geq5\delta^c-15\sqrt{n}$ (the second inequality follows from $n>100$),
i.e., $\delta^c\leq n/5+3\sqrt{n}$, which is a contradiction.
\end{proof}
Let $B'_2=\bigcup_{w\in B_1}S_{w}$. Then we have the following claim similar to Claim \ref{1}, and a simple proof is included for completeness.
\begin{claim}\label{2}
For each vertex $w'\in B'_2$, $|C(w',A_1)|\leq2$.
\end{claim}
\begin{proof}
Suppose that there is a vertex $w'\in B'_2$ such that there are at least three colors between $w'$ and $A_1$. By the definition of $B'_2$, $w'\in S_{w}$ for some vertex $w\in B_1$. In this case, we can greedily find a neighbor of $w'$ in $A_1$, say $r$, such that $c(rw')\notin \{c(ww'), c(wu), c_1\}$, which implies that $uww'ru$ is a rainbow $C_4$, which is a contradiction.
\end{proof}

Let $G'$ be a maximum $B_1$-proper subgraph of $G[B_1,B'_2]$. Applying Lemma \ref{monorain} to $G'$ with $s=2$ and $t=4$, we obtain a pseudo $B'_2$-canonical subgraph $H'\subseteq G'$ such that $d_{H'}(w)\geq|S_w|-4\sqrt{n}\geq1$ for each vertex $w\in B_1$. Since $B_2'\subseteq B_2$ and
$$\sum\limits_{xy\in E(H'),x\in B_1, y\in B'_2}\left(\frac{1}{d_{H'}(x)}-\frac{1}{d_{H'}(y)}\right)=|B_1|-|B'_2|\geq|B_1|-|B_2|\geq0,$$
there is an edge $u'v'\in E(H')$ with $u'\in B_1$, $v'\in B'_2$ and $d_{H'}(u')\leq d_{H'}(v')$. Hence, we can find a set $S\subseteq N_G(u')\setminus(A_1\cup A_2\cup B_1\cup B_2)$ of size at least $\delta^c-d_{H'}(v')-4\sqrt{n}-4$ and a set $T\subseteq N_G(v')\setminus(A_1\cup A_2\cup B_1\cup B_2)$ of size at least $d_{H'}(v')-1$. Since $G$ is triangle-free, $S$ and $T$ are disjoint. As $n>100$, it follows that
\begin{align}
n\geq&|A_1|+|A_2|+|B_1|+|B_2|+|S|+|T|\nonumber\\
\geq&2\delta^c-8\sqrt{n}+2\delta^c-6+\delta^c-4\sqrt{n}-5\nonumber\\
\geq&5\delta^c-15\sqrt{n},\nonumber
\end{align}
i.e., $\delta^c\leq n/5+3\sqrt{n}$, which is a contradiction.
\end{proof}


\section{Concluding Remarks}
In this paper we mainly study color degree conditions forcing properly colored cycles of length at most $r$ or properly colored complete bipartite graphs. As a crucial tool, Theorem \ref{co-di} reveals a close relationship between edge-colored graphs and oriented graphs. Using Theorem \ref{co-di}, we have reduced some problems on properly colored cycles to the problems on directed cycles in oriented graphs.

It is worth noting that $f(n,r)\leq n/(r-73)$ for all $r>73$ by a result of Shen \cite{SHEN1}. Based on this result, we claim that the term $2\sqrt{n}$ in Theorem \ref{CH-co} can not be replaced by any absolute constant when $n$ is sufficiently large and $r=cn$ for some fixed constant $c$. Indeed, suppose in the case $r=cn$, the term $2\sqrt{n}$ in Theorem \ref{CH-co} can be replaced by an absolute constant, say $C$. Then by Theorem \ref{CH-co}, $f^c(n,cn)\leq C+f(n,cn)\leq C+2/c$, which contradicts a conclusion of Li and Wang \cite{LW} which asserts that for every positive integer $l$, there exists an edge-colored graph $G$ with $\delta^c(G)\geq l$ and no properly colored cycle. Recently, Fujita, Li and Zhang \cite{FLZ} obtained a tight bound.
\begin{theorem}\emph{\cite{FLZ}}
For all positive integers $n,d$ with $d!\sum\limits_{i=1}^{d}1/i!>n$, every edge-colored graph $G$ on $n$ vertices with $\delta^c(G)\geq d$ contains a properly colored cycle.
\end{theorem}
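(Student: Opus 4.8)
The plan is to prove the contrapositive in a quantitative form: if $G$ is edge-colored with $\delta^c(G)\ge d$ and $G$ has \emph{no} properly colored cycle, then $n\ge a_d$, where I abbreviate $a_d:=d!\sum_{i=1}^d 1/i!=\sum_{i=1}^d d!/i!$. The point is that $a_d$ satisfies the clean recursion $a_d=d\,a_{d-1}+1$ (with $a_0=0$, so $a_1=1$, $a_2=3$, $a_3=10$, $\dots$), which immediately suggests an inductive ``one root vertex plus $d$ branches'' argument.

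The engine is Yeo's structural lemma for edge-colored graphs without properly colored cycles \cite{YEO}: any such graph with at least one edge has a vertex $v$ for which every connected component of $G-v$ is joined to $v$ by edges of a single color. Since $\delta^c(G)\ge d\ge 1$ forces $G$ to have edges (indeed $n\ge 2$), I would first extract such a vertex $v$. Because $v$ meets at least $d$ distinct colors, and because all edges from $v$ into a fixed component carry one common color, the components of $G-v$ that touch $v$ must realize these colors one at a time; hence there are at least $d$ of them, say $C_1,\dots,C_m$ with $m\ge d$.

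The induction is on $d$. Each $C_j$ inherits the property of having no properly colored cycle, and I claim $\delta^c(G[C_j])\ge d-1$: a vertex of $C_j$ nonadjacent to $v$ retains all of its colors, while a vertex adjacent to $v$ loses at most the single color of the edges joining $v$ to $C_j$, so it keeps at least $d-1$ colors inside $C_j$. (Note $\delta^c\ge d\ge 2$ already rules out single-vertex components, so the recursion never degenerates.) By the induction hypothesis applied with parameter $d-1$, each component satisfies $|C_j|\ge a_{d-1}$; summing over the $\ge d$ such components and adding $v$ gives $n\ge 1+d\,a_{d-1}=a_d$. The base cases $d\le 1$ are immediate since $a_0=0$ and $a_1=1$. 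Translating back, whenever $n<a_d$, equivalently $d!\sum_{i=1}^d 1/i!>n$, the assumption ``no properly colored cycle'' is untenable, so a properly colored cycle exists.

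The step I expect to demand the most care is the bookkeeping around Yeo's lemma: one must argue that the $\ge d$ colors at $v$ genuinely force $\ge d$ \emph{distinct} components (the monochromatic-attachment conclusion is exactly what prevents several colors from hiding inside one component), and one must verify that passing to a component drops the color degree by at most one, so that the induction index decreases by precisely one and the recurrence $a_d=d\,a_{d-1}+1$ is reproduced. Once these two points are pinned down, the remainder is just solving this recurrence, which returns the stated factorial-sum threshold.
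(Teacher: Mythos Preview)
The paper does not prove this theorem at all: it is quoted in the concluding remarks as a result of Fujita, Li and Zhang \cite{FLZ}, with no argument given. So there is no ``paper's own proof'' to compare your attempt against.

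That said, your argument is sound and is essentially the intended one. The recursion $a_d=d\,a_{d-1}+1$ is correct, Yeo's lemma supplies exactly the splitting vertex you need, and the two points you flag as delicate are both fine: distinct colors at $v$ land in distinct components (since all $v$--$C$ edges share one color per component $C$), and each component loses at most one color at each of its vertices, giving $\delta^c\ge d-1$ there. The base case $d=1$ (or $d=0$ with $a_0=0$) closes the induction. One cosmetic remark: your parenthetical ``$\delta^c\ge d\ge 2$ rules out single-vertex components'' is really the observation that $\delta^c(G[C_j])\ge d-1\ge 1$ forces $|C_j|\ge 2$, which is what the induction actually uses; the recursion still bottoms out correctly at $d=1$.
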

So an interesting problem is to know the exact estimate on the term $2\sqrt{n}$ in Theorem \ref{CH-co}. Hence, the first open case is $r=4$.

Theorem \ref{kst} establishes the color degree condition forcing a properly colored $K_{s,t}$. 
It would be interesting to know whether there exists a construction indicating that the order of magnitude $n^{2-1/s}$ in Theorem \ref{kst} is tight up to the constant $\sigma_{s,t}$.

In \cite{LI},  Li proved that every edge-colored graph $G$ on $n$ vertices with $\delta^c(G)\geq(n+1)/2$ contains a rainbow triangle. As an extension, one can consider the minimum color degree conditions for larger rainbow cliques. In addition, it would be also interesting to determine the minimum color degree condition forcing a rainbow cycle of length at most $r$ for $r\geq4$.

\bigskip

\section{Acknowledgements}
The authors would like to thank Zi-Xia Song for her helpful discussions and suggestions and the two referees for their comments. This work was supported by the National Natural Science Foundation of China (11631014, 11871311, 11901226), the China Postdoctoral Science Foundation (2019M652673, 2021T140413) and the Taishan Scholar Project-Young Experts Plan.


\end{document}